\theoremstyle{theorem}
\newtheorem{theorem}{Theorem}[section]
\theoremstyle{corollary}
\newtheorem{corollary}{Corollary}[section]
\theoremstyle{lemma}
\newtheorem{lemma}{Lemma}[section]
\theoremstyle{definition}
\theoremstyle{proof}
\theoremstyle{remark}
\theoremstyle{example}
\newtheorem{example}{Example}[section]
\theoremstyle{observation}
\begin{document}
  \setcounter{Maxaffil}{2}
   \title{Spectral properties of $\mathcal{C}$-graphs}
   \author[ ]{\rm Santanu Mandal$^{1,2}$\thanks{santanu.vumath@gmail.com}}
   \author[ ]{\rm Ranjit Mehatari$^{1}$\thanks{ranjitmehatari@gmail.com, mehatarir@nitrkl.ac.in}}
   
   \affil[1 ]{Department of Mathematics, National Institute of Technology, Rourkela-769008, India}

\affil[2 ]{School of Computing Science and Artificial Intelligence,}
\affil[ ]{VIT Bhopal University, Bhopal-466114, India}
   \maketitle
 \begin{abstract}
Assumed to be undirected, simple, and connected are all of the graphs in this study, and adjacency matrix $A$ serves as the associated matrix. In this paper we show that it is possible to relate a creation sequence for a type of cographs (we call it $\mathcal{C}$-graphs).
Those cographs can be defined by a finite sequence of natural numbers. Using that sequence we obtain the inertia  of the cograph under consideration. An extended eigenvalue-free set from $(-1,0)$  to $\big{[}\frac{-1-\sqrt{2}}{2}, -1)\cup (-1, 0) \cup (0, \frac{-1+\sqrt{2}}{2}\alpha_{min}\big{]}$, (where $\alpha_{min}\geq1$ is the smallest integer of the creation sequence) is obtained for the cographs under consideration. Additionally, an exact formula is found for the characteristic polynomial.
\end{abstract}
\textbf{AMS Classification: } 05C50.\\
\textbf{Keywords:} Cograph, quotient matrix, eigenvalue-free interval, characteristic polynomial, inertia. 
\section{Introduction}

The concept of cographs was unfolded independently  by several researchers \cite{Corneil 1, Jung,Lerchs, Seinsche, Sumner} around $1970$. They give a simple structural decomposition with disjoint union and graph complement operations that can be described by a labelled tree which are used algorithmically to efficiently solve many hard problems, like, finding the maximum clique. 

We know that threshold graphs \cite{Aguilar1,Aguilar2,Bapat} and chain graphs \cite{Ala,Bell,Mandal} have creation sequence, mostly known as binary sequence or binary string. In this context, an experienced reader will surely recall that every $n$-vertex tree (even more, a labelled $n$-vertex tree) is generated by a unique sequence of $n-2$ numbers called the Pr\"{u}fer sequence~\cite{Pru}. And, of course, there are other graphs that are uniquely generated by a finite sequence in a similar way. The natural question arise: which graphs have a creation sequence (except tree, threshold and chain graphs)? We try to answer this problem partially and deduce that a type of cographs can be generated by a finite sequence of natural numbers. The key benefit of such creation sequence is that we may create an equitable partition. Because, an equitable partition is a convenient way to examine a graph's spectral features. Note that a threshold graph is a $\{P_4, C_4, 2K_2\}-$ free graph and a cograph is a $\{P_4\}-$ free graph. So a threshold graph likewise qualifies as a cograph. However, the cographs under consideration do not contain threshold graphs except possibly for few exceptions.\medskip

Recall that, a $P_4$-free graph is called a cograph and a cograph can be constructed from a single vertex by joining another cograph and by taking compliments. The class of all cographs is closed under the following operations:
\begin{itemize}
\item
disjoint union of graphs.
\item
complementation of a graph.
\end{itemize}
In this paper we only consider those cographs which can be constructed in the following way:

Let $\alpha_1,\alpha_2,\ldots,\alpha_m$ be natural numbers and $\alpha_1+\alpha_2+\cdots+\alpha_m=n$. We recursively define a cograph, denoted by $C(\alpha_1,\alpha_2,\ldots,\alpha_m)$, which satisfies the recurrence relation:
 $$C(\alpha_1,\alpha_2,\ldots,\alpha_i)=\overline{C(\alpha_1,\alpha_2,\ldots,\alpha_{i-1})\cup K_{\alpha_i}},$$
 for $i=2,3,\ldots,m$, where $C(\alpha_1)=\overline{K}_{\alpha_1}$.
 
 In other words, to construct  $C(\alpha_1,\alpha_2,\ldots,\alpha_m)$, we start with $K_{\alpha_1}$, then take its compliment. In the next step, we take disjoint union of $K_{\alpha_2}$ with the graph obtained in the first step and then take its complement. Proceeding in this way, finally, we take disjoint union of $K_{\alpha_m}$ with the graph obtained in the $(m-1)$-th step and then take its complement. A construction of the 10 vertex graph $C(3,2,2,3)$ is illustrated in the Figure \ref{Cgraph_fig1}.
 
  \begin{figure}[h]
\includegraphics[width=\textwidth]{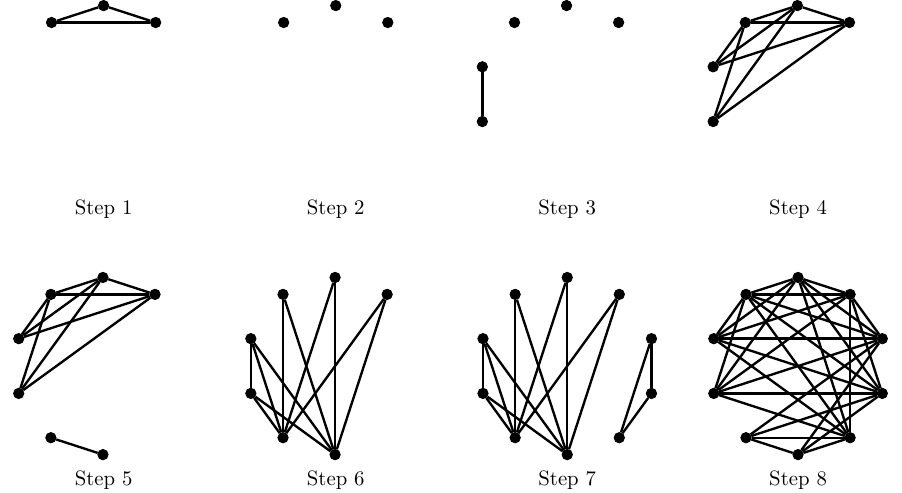}
\caption{Construction of the $\mathcal{C}$-graph $C(3,2,2,3)$}
\label{Cgraph_fig1}
\end{figure} 

  Let $\mathcal{C}$ denote the class of cographs which can be constructed in the above way, and if $G\in \mathcal{C}$ then we call $G$ a $\mathcal{C}$-graph.\medskip
  
 It may look like $\mathcal{C}$ contains a very small class of cographs, but  the class of $\mathcal{C}$-graphs is sufficiently large class of cographs. We observe that, the number of $\mathcal{C}$-graphs of order $n$ is more than the number of threshold graphs of order $n$, the number of all chain graphs of order $n$ as well as the total number of complete multipartite graphs of order $n$. In fact, there are more than 1000 $\mathcal{C}$-graphs of order 12 and more than 8388600 $\mathcal{C}$-graphs of order 25. Besides, $\mathcal{C}$ contains some of the important classes of graphs. The following graphs are categorised as $\mathcal{C}-$graphs in order to better understand how they constitute a significant large subclass of cographs.
 \begin{itemize}
 \item
  A split graph is a graph in which the vertices can be partitioned into a clique and an independent set; equivalently $\{C_4,C_5,2K_2\}$-free graph is a split graph. A complete split graph (studied in \cite{KCD1,Han1,Song}) is a split graph in which each vertex of the clique is adjacent to each vertex of the independent set. We observe that every complete split graph is a $\mathcal{C}$-graph, and it can be represented by $C(\alpha_1,\alpha_2)$. However, not all split is a $\mathcal{C}$-graph except $C(1,1,1,\cdots,1)$ and $C(1,2,1,\cdots,1)$.
 \item
 A connected graph in which atmost two vertex can have the same degree is called an antiregular graph. An antiregular graph is a $\mathcal{C}$-graph, and it's representation is either $C(1,1,1,\cdots,1)$ or $C(1,2,1,\cdots,1)$.
 \item A chordal graph is a graph with no induced subgraph isomorphic to the cycle of length four or above. Therefore a cograph is a chordal graph if it is $C_4$-free. Note that a $\{P_4,C_4\}$-free graph is called a quasi-threshold graph. Hence the $\mathcal{C}$-graph $C(\alpha_1,\alpha_2,\ldots,\alpha_{2k})$ represents a chordal graph (i.e., a quasi-threshold graph) if $\alpha_{2i}>1$ for at most one $i$, where $1\leq i\leq k$.
\item Further, the complete graph $K_n$ can be represented by $C(n-1,1)$, where as the $\mathcal{C}$-graph $C(p-1,1,q)$ is the complete bipartite graph $K_{p,q}$.
 \end{itemize}

 Throughout the paper we consider the $m$ to be even, i.e, $m=2k$ for some $k\in \mathbb{N}$, i.e., we consider those $\mathcal{C}$-graphs which can be constructed in even number of steps. \medskip

We observe that, in general a threshold graph $G$ is not a $\mathcal{C}$-graph, and if $G\in\mathcal{C}$ then $G$ must be the antiregular graph  $C(1,1,1,\cdots,1)$ or $C(1,2,1,\cdots,1)$. \medskip

For an undirected, simple graph $G$ with $n$ vertices, its adjacency matrix (generally denoted by $A$) is a square matrix whose $(i,j)$-th entry is $1$ if the vertices $i$ and $j$ are connected by an edge and $0$ otherwise.
We denote the $n$-vertex antiregular graph by $H_n$ and its adjacency matrix by $A_n$. \medskip

We now show that if $G\in \mathcal{C}$ is constructed in even number of steps, then its representation is unique.
  \begin{theorem}
 Let $G$ be a $\mathcal{C}$-graph with n vertices. If $G\in\mathcal{C}$ is generated in even number of steps, then there exists an unique integer partition $\{\alpha_1,\alpha_2,\ldots,\alpha_{2k}\}$ of $n$ such that $G=C(\alpha_1,\alpha_2,\ldots,\alpha_{2k})$. In other wards, the representation of $G$ is unique.
 \end{theorem}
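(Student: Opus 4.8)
The plan is to prove uniqueness by extracting the parameters $\alpha_1,\ldots,\alpha_{2k}$ from intrinsic, coordinate-free invariants of the graph $G$, so that any two representations must give the same sequence. The key observation is that the recursive construction alternates complementation with disjoint union, and complementation is an involution: peeling off one layer of the construction corresponds to passing to the complement. Concretely, I would first record how the construction behaves under complementation. Since $C(\alpha_1,\ldots,\alpha_i)=\overline{C(\alpha_1,\ldots,\alpha_{i-1})\cup K_{\alpha_i}}$, taking complements of both sides yields
\begin{equation}
\overline{C(\alpha_1,\ldots,\alpha_i)}=C(\alpha_1,\ldots,\alpha_{i-1})\cup K_{\alpha_i}.
\end{equation}
Thus the complement of a $\mathcal{C}$-graph is \emph{disconnected}, and it splits as a disjoint union of a strictly smaller $\mathcal{C}$-graph together with one clique $K_{\alpha_i}$.

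Next I would argue that this splitting is forced, i.e.\ that the top parameter $\alpha_m$ and the remaining $\mathcal{C}$-graph are uniquely determined by $G$. The crucial point is that, by construction, every $\mathcal{C}$-graph $C(\alpha_1,\ldots,\alpha_j)$ obtained after at least two steps is \emph{connected} (the final operation is a join-type complementation that links the new clique to all previous vertices), whereas $\overline{K}_{\alpha_1}$ is edgeless. Therefore, in the complement $\overline{G}=C(\alpha_1,\ldots,\alpha_{m-1})\cup K_{\alpha_m}$, the connected components are: the single connected component $C(\alpha_1,\ldots,\alpha_{m-1})$ (if $m-1\ge 2$), plus the one clique $K_{\alpha_m}$. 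Because the clique $K_{\alpha_m}$ is a component that is itself complete while the other component is a $\mathcal{C}$-graph that is \emph{not} complete (for $m-1\ge 2$, it contains an induced non-edge coming from the penultimate complementation), the decomposition into ``the clique part'' and ``the $\mathcal{C}$-graph part'' is canonical. This identifies $\alpha_m$ as the size of that distinguished clique component and identifies $C(\alpha_1,\ldots,\alpha_{m-1})$ as the other component, with no ambiguity.

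With the peeling step established, the uniqueness follows by induction on the number of steps $m=2k$. The base case is $C(\alpha_1,\alpha_2)$: here $\overline{C(\alpha_1,\alpha_2)}=\overline{K}_{\alpha_1}\cup K_{\alpha_2}$, and one reads off $\alpha_2$ as the size of the complete component and $\alpha_1$ as the number of isolated vertices, so the pair $(\alpha_1,\alpha_2)$ is determined by $G$. For the inductive step, given $G=C(\alpha_1,\ldots,\alpha_{2k})$, form $\overline{G}$, extract the distinguished clique component to recover $\alpha_{2k}$, and apply the induction hypothesis to the remaining $\mathcal{C}$-graph $C(\alpha_1,\ldots,\alpha_{2k-1})$ to recover the rest. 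A small care is needed because the remaining graph has an \emph{odd} number of steps, so I would either phrase the induction over all $\mathcal{C}$-graphs (both parities) or track one extra complementation to stay within $\mathcal{C}_{even}$; the cleanest route is to prove the peeling lemma for arbitrary $m$ and then specialize.

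The main obstacle I anticipate is justifying rigorously that the decomposition of $\overline{G}$ into ``clique plus $\mathcal{C}$-graph'' is genuinely canonical, i.e.\ that one cannot absorb part of $\alpha_m$ into the other component or vice versa. This requires confirming that $C(\alpha_1,\ldots,\alpha_{m-1})$ is connected and not complete whenever $m-1\ge 2$, so it can never be confused with a clique component, and separately handling the degenerate low-step cases where the ``$\mathcal{C}$-graph part'' might itself be empty or edgeless. Pinning down these structural facts about connectivity and non-completeness of intermediate $\mathcal{C}$-graphs is where the real work lies; once they are in place, the inductive peeling argument closes the proof immediately.
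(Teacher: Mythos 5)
Your strategy is essentially the paper's own: both arguments peel off the last-added layer after identifying it canonically, the paper locating it inside $G$ as an independent set joined to every other vertex, you locating it inside $\overline{G}$ as a clique connected component (these are the same set viewed in complementary graphs), and then both recurse. So there is no methodological divergence to report; the question is whether the canonicity step, which you correctly single out as ``where the real work lies,'' can actually be carried out.

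It cannot, and this is a genuine gap rather than a loose end. The structural fact you need --- that $C(\alpha_1,\ldots,\alpha_{m-1})$ is never complete when $m-1\ge 2$, so that it can always be told apart from the clique component --- is false: $C(\alpha_1,1)=\overline{\overline{K}_{\alpha_1}\cup K_1}=K_{\alpha_1+1}$ is a clique. Hence $\overline{C(\alpha_1,1,\alpha_3)}=K_{\alpha_1+1}\cup K_{\alpha_3}$ is a disjoint union of \emph{two} cliques with no distinguished one, and indeed $C(a,1,c)\cong K_{a+1,c}\cong C(c-1,1,a+1)$; for instance $C(1,1,3)\cong C(2,1,2)\cong K_{2,3}$. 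This already rules out your plan to ``prove the peeling lemma for arbitrary $m$ and then specialize,'' since uniqueness is simply false for odd $m$, and the induction from even length must pass through an odd-length graph. Nor can the defect be repaired by bookkeeping, because the ambiguity lifts back to even length: $C(1,1,3,1)=\overline{K_{2,3}\cup K_1}\cong\overline{K_{3,2}\cup K_1}=C(2,1,2,1)$ are two members of $\mathcal{C}_{even}$ on $6$ vertices with distinct partitions ($\{1,1,1,3\}$ versus $\{1,1,2,2\}$) representing the same graph, so the theorem as stated is false whenever $\alpha_2=1$ and $\alpha_1+1\neq\alpha_3$. The paper's proof founders on exactly the same rock at the lower levels of its recursion (``proceeding in this way''): once the peeling reaches a complete bipartite core $K_{\alpha_1+1,\alpha_3}$, the independent set joined to all other vertices is no longer unique, and the inference $\alpha_i=\beta_i$ breaks down. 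Any correct version of the statement needs an additional hypothesis such as $\alpha_2\ge 2$.
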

 \begin{proof}
 First we observe that the largest possible order of an induced antiregular subgraph of $C(\alpha_1,\alpha_2,\ldots,\alpha_{2k})$ is  $2k$ or $2k+1$.
Therefore, if $G=C(\alpha_1,\alpha_2,\ldots,\alpha_{2k})=C(\beta_1,\beta_2,\ldots,\beta_{2l})$, then we must have $k=l$. Now we show that $\alpha_i=\beta_i$ for $i=1,2,\ldots,2k$.

Let $f:C(\alpha_1,\alpha_2,\ldots,\alpha_{2k})\rightarrow C(\beta_1,\beta_2,\ldots,\beta_{2k})$ be an isomorphism. Let $H$ be the set of vertices which are added in the final step in the construction of $C(\alpha_1,\alpha_2,\ldots,\alpha_{2k})$. Then $H$ is an independent subset of $C(\alpha_1,\alpha_2,\ldots,\alpha_{2k})$ and $d(u)=n-\alpha_{2k}$ for all $u\in H$. Then $f(H)$ must be an independent set and every $u\notin f(H)$ is adjacent to every vertex in $f(H)$. Which implies that $\alpha_{2k}=\beta_{2k}$. Then $C(\alpha_1,\alpha_2,\ldots,\alpha_{2k})\setminus H$ is isomorphic to $C(\beta_1,\beta_2,\ldots,\beta_{2k})\setminus f(H)$. Then applying  similar argument to their complement we obtain $\alpha_{2k-1}=\beta_{2k-1}$. Proceeding in this way we conclude that $\alpha_i=\beta_i$ for all $i=1,2,\ldots,k$. This completes the proof of the theorem.
 \end{proof}

%In \cite{Paulusma}, the author give  a characterization of cographs. The most significant characterization of cograph is that it is a graph with no induced subgraph isomorphic to $P_4$ \cite{Corneil 1, Corneil 2}.Brandstdt \cite{Brandstdt} studied the class of $P_4$ free graphs. Sander \cite{Sander} pointed out various applications of cograph such as biology \cite{Gagneur}, parallel computing \cite{Nakano} and so on. Cographs gain lot of attention in the last two decades. Few notable works are listed here.
In this paper we consider the adjacency matrix of a subclass of cographs, namely $\mathcal{C}$-graphs. Spectral characterization of cographs has been done previously by many authors \cite{Tura 3, Tura 4, Stanic, Chang, Ghorbani 1, He, Jacobs 2,Trevisan 1, Royle, Sander}. We mention some notable works here. Royle \cite{Royle} proved that the rank of a cograph is equal to the number of distinct non-zero rows of $A$. In \cite{Stanic}, B\i y\i ko\u{g}lu et al.~proved that $0,-1$ belong to the spectrum of a connected cograph if and only if it contains duplicate (resp. coduplicate) vertices. In \cite{Trevisan 1}, Mohhamadian and Trevisan investigated an eigenvalue-free interval for cographs. They showed that $(-1,~0)$ is an eigenvalue-free interval for the adjacency matrix of a cograph; here we extend this interval to a bigger eigenvalue exclusion set. Jacobs \emph{et~al.}~\cite{Jacobs 2} obtained a formula for the inertia of a cograph and listed a pair of equienergetic cographs. Allem and Tura \cite{Tura 3,Tura 4} wrote several papers on spectral properties of cographs. They investigated the multiplicity of eigenvalues $\lambda\neq 0,-1$. They obtained a family of non-cospectral and border-energetic cographs. They also constructed a class of integral cographs. Finally, in \cite{He}, He \emph{et al.}~established the validity of a conjecture related to the signature of chordal graphs and cographs. Allem \emph{et~al.} \cite{Tura 5} develop algorithms to obtain eigenvalue-free (within a specific range) threshold graphs (a subclass of cographs). Chen and Tura \cite{CT} also addressed this direction in a recent study using a different approach. We examined the Laplacian spectrum for this class of cographs in our earlier study \cite{MMZ}. 

\medskip
In this paper, our approach to the cographs is different. We only consider $\mathcal{C}$-graphs coded by a finite sequence. The sketch of the paper is as follows: in Section $2$, we determine the inertia and multiplicity of the trivial eigenvalues $-1,~0$ of $A$. In Section $3$, we obtain an eigenvalue-free interval for $\mathcal{C}$-graphs; in fact we show that any eigenvalue other than $0$ and $-1$ is not 
contained  in the interval $\big{[}\frac{-1-\sqrt{2}}{2}, \frac{-1+\sqrt{2}}{2}\alpha_{min} \big{]}$, where $\alpha_{min}\geq1$ is the smallest integer of the creation sequence. Finally in Section $4$, we find the determinant of a special type of tridiagonal matrix and using that we obtain the characteristic polynomial of the adjacency matrix of a $\mathcal{C}$-graph.

%%%%%%%%%%%%%%%%%%%%%%%%%%%%%%%%%%%%%%%%%%%%%%%%%%%%%%%%
%%%%%%%%%%%%%%%%%%%%%%%%%%%%%%%%%%%%%%%%%%%%%%%%%%%%%%%%

\section{Eigenvalues of $\mathcal{C}$-graphs}
Let $G=C(\alpha_1,\alpha_2,\ldots,\alpha_{2k})$ be a $\mathcal{C}$-graph on $n$ vertices. In this section, we first describe some properties of the quotient matrix  corresponding  to an equitable partition. Using these properties we determine the multiplicity of  the eigenvalues $-1$ and $0$. We also find the number of negative, zero, and positive eigenvalues of $A$.
\begin{lemma}
\label{Cgraphs_eig1_lm1}
Let $m\geq2$ and let $S=\{{i_1},{i_2},\ldots,{i_m}\}$ form a clique in a graph $G$ such that each vertex of $S$ has the same set of neighbours outside $S$. Then $-1$ is an eigenvalue of $G$ with multiplicity at least $m-1$.
\end{lemma}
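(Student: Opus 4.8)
The plan is to exhibit an explicit $(m-1)$-dimensional subspace of eigenvectors for the eigenvalue $-1$, rather than to compute any characteristic polynomial. Writing $A$ for the adjacency matrix of $G$, I would consider vectors $x$ supported entirely on $S$, that is $x_v=0$ for every $v\notin S$, subject to the single linear constraint $\sum_{j=1}^{m}x_{i_j}=0$. The collection of all such vectors is a linear subspace of dimension exactly $m-1$, so the whole lemma reduces to checking that every vector of this form satisfies $Ax=-x$.

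The verification then splits into two cases according to the vertex at which $(Ax)$ is evaluated. For a vertex $v\notin S$, the hypothesis that every vertex of $S$ has the same set of neighbours outside $S$ guarantees that the entries $A_{v,i_j}$ are all equal to a common value $c_v\in\{0,1\}$; consequently $(Ax)_v=c_v\sum_{j}x_{i_j}=0=-x_v$, where I use both the zero-sum constraint and $x_v=0$. For a vertex $i_a\in S$, the clique hypothesis gives $A_{i_a,i_j}=1$ whenever $j\neq a$, together with $A_{i_a,i_a}=0$, so that $(Ax)_{i_a}=\sum_{j\neq a}x_{i_j}=\left(\sum_{j}x_{i_j}\right)-x_{i_a}=-x_{i_a}$. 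Combining the two cases shows $Ax=-x$ on the entire vertex set, so the subspace lies inside the $(-1)$-eigenspace and the geometric multiplicity of $-1$ is at least $m-1$.

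I do not expect a genuine obstacle here, since the argument is a direct construction; the only points requiring care are bookkeeping ones. One must confirm that the constant $c_v$ is truly independent of $j$, which is precisely the content of the ``same set of neighbours outside $S$'' assumption, and one must use that the diagonal of $A$ vanishes, since this is what makes the internal computation return $-x_{i_a}$ rather than $0$. It is also worth noting explicitly that the $(m-1)$-dimensional subspace is contained in, but need not equal, the full $(-1)$-eigenspace, which is exactly why the conclusion is phrased as a lower bound on the multiplicity.
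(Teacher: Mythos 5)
Your proof is correct and takes essentially the same approach as the paper: the paper exhibits an explicit orthogonal family of $m-1$ vectors supported on $S$ with zero entry sum, which is precisely a basis of the zero-sum subspace you describe, and verifies $Ax=-x$ the same way. Your packaging via the dimension of the whole subspace is a touch cleaner but not a different argument.
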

\begin{proof}
For $j=1,2,\ldots,m-1$, we construct eigenvectors $X_j$ corresponding to $-1$ defined by
\begin{eqnarray*}
X_j(l)=\begin{cases}
1,&\text{if }l=i_k, k\leq j,\\
-j,&\text{if }l=i_{j+1},\\
0,&\text{otherwise.}
\end{cases}
\end{eqnarray*}
It is easy to verify that $AX_j=-X_j$ and the set $\{X_j|j=1,2,\ldots,m-1\}$ is orthogonal. So the multiplicity of $-1$ is at least $m-1$.
\end{proof}
We can use a similar argument for independent sets in a graph, and can construct an orthogonal set of eigenvectors corresponding to the eigenvalue 0.
\begin{lemma}
\label{Cgraphs_eig0_lm1}
Let $m\geq2$ and let $S=\{{i_1},{i_2},\ldots,{i_m}\}$  be an independent set in a graph $G$ such that each vertex of $S$ has the same set of neighbours outside $S$. Then the nullity of $G$ is least $m-1$.
\end{lemma}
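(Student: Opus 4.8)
The plan is to mirror the construction in Lemma \ref{Cgraphs_eig1_lm1} verbatim, with the clique replaced by the independent set $S$. For $j=1,2,\ldots,m-1$ I would use the same family of vectors
\begin{eqnarray*}
X_j(l)=\begin{cases}
1,&\text{if }l=i_k,\ k\leq j,\\
-j,&\text{if }l=i_{j+1},\\
0,&\text{otherwise,}
\end{cases}
\end{eqnarray*}
each supported only on the vertices $i_1,\ldots,i_{j+1}$ of $S$, and then verify that $AX_j=0$ and that the family $\{X_j\}$ is orthogonal.

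To check $AX_j=0$ I would examine each coordinate. Let $N$ denote the common neighbourhood of the vertices of $S$ lying outside $S$. Because $S$ is independent, every vertex $i_t\in S$ has neighbour set exactly $N$; since $X_j$ vanishes on $N$, the coordinate $(AX_j)(i_t)$ is zero. For a vertex $v\in N$, all of $S$ consists of neighbours of $v$, so $(AX_j)(v)=\sum_{k=1}^{j}X_j(i_k)+X_j(i_{j+1})=j+(-j)=0$. For any remaining vertex $v$, the support of $X_j$ lies entirely inside $S$ while $v$ is adjacent to no vertex of $S$, whence $(AX_j)(v)=0$.

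Orthogonality then follows exactly as in the previous lemma: for $j<j'$ the inner product $X_j\cdot X_{j'}$ collects $1$ from each of $i_1,\ldots,i_j$ and $-j$ from $i_{j+1}$, giving $j+(-j)=0$. Since the $m-1$ nonzero vectors $X_1,\ldots,X_{m-1}$ are mutually orthogonal, hence linearly independent, and all lie in the kernel of $A$, the nullity of $G$ is at least $m-1$.

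The single genuine point to get right — and it is precisely where the independent case diverges from the clique case — is the internal contribution in the coordinate computation: in Lemma \ref{Cgraphs_eig1_lm1} each vertex of the clique is adjacent to all the others in $S$, producing the shift that yields the eigenvalue $-1$, whereas here the independence of $S$ removes all internal adjacencies, so the same vectors land in the kernel of $A$ instead. Everything else is a direct transcription of the earlier argument, so I expect no further obstacle.
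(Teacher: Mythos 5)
Your proof is correct and is exactly the argument the paper intends: the paper omits the proof of this lemma, stating only that one applies the ``similar treatment'' of Lemma \ref{Cgraphs_eig1_lm1}, and your transcription --- with the correct observation that independence of $S$ kills the internal adjacency contribution so the same vectors land in the kernel --- is precisely that treatment, with all coordinate checks and the orthogonality verification done correctly.
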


We now recall the concept of equitable partition, which we will use to understand several spectral properties of $\mathcal{C}$-graphs. An equitable partition of a graph $G$ is a partition $\pi=\{C_1,C_2,\ldots C_m\}$ of the vertex set $V$such that, for any two parts $C_i$ and $C_j$, there is a constant $e_{ij}$ such that for all $v\in Ci$, $v$ has exactly $e_{ij}$ neighbours in $C_j$. Here the size of the equitable partition is $m$. The quotient matrix $Q_\pi$ for the equitable partition $\pi$ is a square matrix of order $m$ whose $(i,j)$-th entry is $e_{ij}$. Let $P_\pi$ be the matrix of order $n\times m$ whose  $(i,j)$-th entry is 1 if $i\in C_j$ and $0$ otherwise; the matrix $P_\pi$ is called the characteristic matrix for the equitable partition $\pi$. We now recall a result \cite{godsil} that connects these matrices.
 \begin{theorem}
Let $\pi$ be an equitable partition of a graph $G$, with quotient matrix $Q_\pi(G)$ and characteristic matrix $P_\pi$, then $AP_\pi=P_\pi Q_\pi(G)$.
 \end{theorem} 
  Now, let $\lambda$ be an eigenvalue of $Q_\pi(G)$ and let $X\in \mathbb{R}^{2k}$ be a corresponding eigenvector. Then $A(P_\pi X)=\lambda (P_\pi X)$. This implies that every eigenvalue of $Q_\pi(G)$ is also an eigenvalue of $A$. \medskip
  
Let $G$ be a $\mathcal{C}$-graph. Consider the vertex partition $\pi=\{C_1, C_2, C_3, \ldots,C_{2k}\}$ of a $\mathcal{C}$-graph $G=C(\alpha_1,\alpha_2,\ldots,\alpha_{2k})$ such that the set $C_i,\ 1\leq i\leq 2k$, contains $\alpha_i$ vertices that are added in the $i$-th step while creating $G$. Then $\pi$ is an equitable partition, and the corresponding quotient matrix $Q_\pi(G)$ is given by

  $$Q_{\pi}(G)=\begin{bmatrix}
(\alpha_1-1) &\alpha_2 &0 &\alpha_4&0 &\alpha_6& \ldots & \alpha_{2k} \\
\alpha_1&0 &0 &\alpha_4&0&\alpha_6& \ldots &\alpha_{2k}\\
0 &0&(\alpha_3-1)&\alpha_4 &0 &\alpha_6& \ldots &\alpha_{2k}\\
\alpha_1&\alpha_2&\alpha_3&0&0&\alpha_6&\ldots&\alpha_{2k}\\
0&0&0&0&(\alpha_5-1)&\alpha_6&\ldots&\alpha_{2k}\\
\alpha_1&\alpha_2&\alpha_3&\alpha_4&\alpha_5&0&\ldots&\alpha_{2k}\\
& & & & & & \ddots \\
\alpha_1&\alpha_2&\alpha_3&\alpha_4&\alpha_5&\alpha_6& \ldots & 0
\end{bmatrix}.$$

Let $D=diag[\alpha_1, \alpha_2, \alpha_3, \ldots, \alpha_{2k}]$. Then $Q_{\pi}(G)$ is similar to the symmetric matrix $D^{\frac{1}{2}}Q_{\pi}(G)D^{-\frac{1}{2}}$, thus $Q_{\pi}(G)$ is diagonalizable. Now we study the spectral properties of $Q_{\pi}(G)$.

\begin{theorem}
\label{alpha111}
Let $G=C(\alpha_1,\alpha_2,\ldots,\alpha_{2k})$ be a $\mathcal{C}$-graph of order $n$. Then $-1$ is an eigenvalue of $Q_{\pi}(G)$ if and only if $\alpha_2=1.$ Further if $\alpha_2=1$, then $-1$ is an simple eigenvalue of $Q_{\pi}(G)$.
\end{theorem}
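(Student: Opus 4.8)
The plan is to work with $M := Q_\pi(G) + I$ and to determine $\dim\ker M$ directly: I will show it equals $1$ exactly when $\alpha_2 = 1$ and $0$ otherwise. Since $Q_\pi(G)$ is diagonalizable (as already observed in the text), the nullity of $M$ is the multiplicity of the eigenvalue $-1$, so this settles both assertions at once. A first useful remark, giving the easy implication, is that rows $1$ and $2$ of $M$ agree in every entry except the second, where they read $\alpha_2$ and $1$; hence $R_2 - R_1 = (0,\,1-\alpha_2,\,0,\ldots,0)$. If $\alpha_2 = 1$ the two rows coincide, so $\det M = 0$ and $-1$ is an eigenvalue. The work is therefore to prove the converse together with simplicity.

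First I would translate $MX = 0$, with $X = (x_1,\ldots,x_{2k})^T$, into the block equations coming from the two parities of the index. Writing $b_p := \alpha_{2p}x_{2p}$ and using the explicit pattern of $Q_\pi(G)$, the odd rows give $\alpha_{2p-1}x_{2p-1} = -\sum_{i=p}^{k} b_i$, so the odd coordinates are determined by the $b_i$; substituting these into the even rows yields a closed $k\times k$ system in $b_1,\ldots,b_k$ alone. Because $X$ is recovered linearly and bijectively from $(b_1,\ldots,b_k)$, the nullity of $M$ equals the nullity of this reduced system. Setting $\mu_p := (\alpha_{2p}-1)/\alpha_{2p}$, the $p$-th even equation reads $\mu_p b_p = W + U_p$ with $W = \sum_i b_i$ and $U_p = \sum_{i\le p}\alpha_{2i-1}x_{2i-1}$, and the case $p=1$ collapses (via the odd-row relations) to $\mu_1 b_1 = 0$, which is exactly where $\alpha_2$ enters.

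Next I would difference consecutive equations: subtracting the $(p-1)$-st even equation from the $p$-th turns the system into the three-term relations $-\mu_{p-1}b_{p-1} + (2\mu_p+1)b_p - \mu_{p+1}b_{p+1} = 0$ for $2\le p\le k-1$, bracketed by the boundary rows $\mu_1 b_1 = 0$ and $-\mu_{k-1}b_{k-1} + (\mu_k+1)b_k = 0$. Thus the reduced system is governed by a tridiagonal matrix $T$ with diagonal $(\mu_1,\,2\mu_2+1,\ldots,2\mu_{k-1}+1,\,\mu_k+1)$ and off-diagonal entries $-\mu_p$. Expanding $\det T$ by the standard tridiagonal minor recursion, the first row contributes an overall factor $\mu_1$, so that $\det T = \mu_1 F_k$ with $F_k = (\mu_k+1)E_{k-1} - \mu_{k-1}\mu_k E_{k-2}$, where $E_m$ are the leading principal minors of the generic tridiagonal part. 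The crucial point is that $F_k > 0$ for every admissible choice of the $\alpha_i$: I would prove by induction that $E_m \ge E_{m-1} > 0$, using that each off-diagonal product satisfies $\mu_{p}\mu_{p+1} < 1$ because $\alpha_{2p}$ being a positive integer forces $\mu_p = 1 - 1/\alpha_{2p} \in [0,1)$; the same inequality then gives $F_k > 0$. Consequently $\det T = 0$ if and only if $\mu_1 = 0$, i.e.\ if and only if $\alpha_2 = 1$.

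Finally, for simplicity, observe that when $\alpha_2 = 1$ we have $\mu_1 = 0$, so both the first row and the first column of $T$ vanish; the complementary $(k-1)\times(k-1)$ tridiagonal block has strictly positive determinant by the same positivity argument, whence $\mathrm{rank}\,T = k-1$ and $\dim\ker T = 1$. Since the nullity of $M$ equals the nullity of $T$ and $Q_\pi(G)$ is diagonalizable, the eigenvalue $-1$ has multiplicity exactly $1$. I expect the main obstacle to be the positivity $F_k > 0$ (equivalently, nonsingularity of the trailing tridiagonal block): the determinant of $Q_\pi(G)+I$ does \emph{not} factor as a naive product of linear factors $1 - p\,\alpha_{2p}$ — this already fails at $k=3$ — so the nonvanishing has to be extracted from the sign-controlled tridiagonal recursion rather than from an explicit factorization, and it is precisely the integrality bound $\mu_p < 1$ that makes the induction close.
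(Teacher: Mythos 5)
Your proposal is correct, and although it shares the paper's basic strategy---reduce $\det\big(Q_\pi(G)+I\big)$ to a tridiagonal determinant and isolate the factor carrying $\alpha_2$---it executes it differently and, in one important respect, more completely. The paper performs row operations directly on the $2k\times 2k$ matrix $Q_\pi(G)+I$ to reach a tridiagonal matrix with leading entry $0$ and $(1,2)$ entry $1-\alpha_2$, and then simply asserts that its determinant vanishes if and only if $\alpha_2=1$; similarly, it declares the geometric multiplicity of $-1$ to be ``clearly'' one after exhibiting the eigenvector $[-1,\alpha_1,0,\ldots,0]^T$. Both assertions silently require that the trailing $(2k-2)\times(2k-2)$ tridiagonal block be nonsingular, which the paper never verifies. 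You instead eliminate the odd-indexed variables to obtain a $k\times k$ system in $b_p=\alpha_{2p}x_{2p}$, extract the factor $\mu_1=1-1/\alpha_2$, and prove the complementary cofactor strictly positive by a sign-controlled induction using $\mu_p\in[0,1)$---exactly the nonvanishing statement the paper omits---and the same positivity yields the rank count needed for simplicity. Two small imprecisions in your write-up, neither fatal: the three-term relations require differencing the even equations \emph{twice} (a single differencing still leaves the tail $\sum_{i\ge p}b_i$), and in the resulting matrix the $(1,2)$ entry is $0$ rather than $-\mu_2$ (which is in fact what makes $\det T=\mu_1F_k$ an exact factorization and makes the first row and column vanish simultaneously when $\alpha_2=1$). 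With those details tidied, your argument is a complete and somewhat stronger proof than the one in the paper.
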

\begin{proof}
To prove the first part of the theorem, we show that the rank of $Q_{\pi}(G)+I$ is less than $n$ if and only if $\alpha_2=1$. We have,
$$Q_{\pi}(G)+I=\begin{bmatrix}
\alpha_1 &\alpha_2 &0 &\alpha_4&0 &\alpha_6& \ldots & \alpha_{2k} \\
\alpha_1&1 &0&\alpha_4&0&\alpha_6& \ldots &\alpha_{2k}\\
0 &0&\alpha_3&\alpha_4 &0 &\alpha_6& \ldots &\alpha_{2k}\\
\alpha_1&\alpha_2&\alpha_3&1&0&\alpha_6&\ldots&\alpha_{2k}\\
0&0&0&0&\alpha_5&\alpha_6&\ldots&\alpha_{2k}\\
\alpha_1&\alpha_2&\alpha_3&\alpha_4&\alpha_5&1&\ldots&\alpha_{2k}\\
& & & & & & \ddots \\
\alpha_1&\alpha_2&\alpha_3&\alpha_4&\alpha_5&\alpha_6& \ldots & 1
\end{bmatrix}.$$
We now perform the following row operations step by step:\\
(i) $R'_{2k-i}\leftarrow R_{2k-i}-R_{2k-i-2}$ for all $i=0,1,\ldots,2k-3$.\\
(ii) $R'_2\leftarrow R_2-R_1$.\\
(iii) $R'_1\leftarrow R_1+\sum_{i=1}^{k-1} R_{2i+1}$.\\
(iv) $R_{2k}\leftrightarrow R_{2k-1}\leftrightarrow \cdots\leftrightarrow R_2\leftrightarrow R_1\leftrightarrow R_{2k}$.\medskip

After performing above operations we obtain a matrix, say $T$, which is row equivalent to $Q_\pi(G)+I$, where
$$T=\begin{bmatrix}
0&1-\alpha_2 &0 &0&\ldots &0\\
-\alpha_1 &-\alpha_2&\alpha_3-1&0 & \ldots &0\\
0&\alpha_2-1&\alpha_3&1-\alpha_4&\ldots&0\\
0&0&-\alpha_3&-\alpha_4&\ldots&0\\
& & & & \ddots \\
0 &0 &0 &0& \ldots &-\alpha_{2k}
\end{bmatrix}.$$
Here $T$ is a tridiagonal matrix with leading entry 0. So $\det T=0$ if and only if $\alpha_2=1$. Consequently, $-1$ is an eigenvalue of $T$ if and only if $\alpha_2=1$.

Next suppose that $\alpha_2=1$. Then eigenvector of $Q_\pi$ corresponding to the eigenvalue $-1$ is $X=\left[\begin{array}{ccccccc}
-1&\alpha_1&0&0&\cdots&0&0
\end{array}\right]^T$. Clearly the geometric multiplicity of the eigenvalue $-1$ is one. Thus, $Q_{\pi}(G)$ being diagonalizable, the algebraic multiplicity of the eigenvalue $-1$ is also one. Hence the result follows.
\end{proof}

In the context of the paper, the preceding result is important. For the equitable partition $\pi$ of a $\mathcal{C}$-graph $G$, each vertex set $C_i$ either forms a clique or an independent set. This makes it easier in figuring out the multiplicity and inertia of the eigenvalue $-1$. We go into further detail about this in the following section.

\subsection{Inertia of a $\mathcal{C}$-graph}
Let $B$ be a square matrix and let $n_{-}(B),~n_{0}(B)$ and $n_{+}(B)$ denote the number of negative, zero and positive eigenvalues of $B$ respectively. The triplet $(n_{-}(B),~n_{0}(B),~n_{+}(B))$ is called the inertia of $B$. The inertia of the adjacency matrix of a graph is known as the inertia of the graph. We now recall some important well-known results (for more details we refer to the book \cite{Horn} by Horn and Johnson).  

\begin{theorem}[\textbf{Weyl's inequality}]
\label{Weyl}
Let $B, ~C$ be Hermitian matrices of order $n$ and let the respective eigenvalues of $B,~C$, and $B+C$ be $\{ \lambda_i(B)\}_{i=1}^{n}$, $\{ \lambda_i(C\}_{i=1}^{n}$, $\{ \lambda_i(B+C)\}_{i=1}^{n}$, each algebraically ordered as below:
\begin{equation}
\lambda_1\leq \lambda_2 \leq \lambda_3 \leq \cdots \leq \lambda_n.
\end{equation}
Then $$\lambda_i(B)+ \lambda_1(C) \leq \lambda_i(B+C) \leq \lambda_i(B)+ \lambda_n(C),$$ where $i=1,2,\ldots,n.$
\end{theorem}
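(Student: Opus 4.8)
The plan is to derive both inequalities from the Courant--Fischer variational (min--max) characterization of the eigenvalues of a Hermitian matrix, combined with the elementary Rayleigh-quotient bounds $\lambda_1(C)\le \frac{x^{*}Cx}{x^{*}x}\le \lambda_n(C)$, valid for every nonzero $x\in\mathbb{C}^n$. Recall that, with eigenvalues in ascending order, Courant--Fischer gives for each index $i$
\[
\lambda_i(B)=\min_{\substack{V\subseteq\mathbb{C}^n\\ \dim V=i}}\ \max_{\substack{x\in V\\ x\neq 0}}\frac{x^{*}Bx}{x^{*}x},
\]
and the analogous formula with $B$ replaced by $B+C$. I would prove the two bounds separately but in parallel, the key algebraic fact being the additivity of the Rayleigh quotient, $\frac{x^{*}(B+C)x}{x^{*}x}=\frac{x^{*}Bx}{x^{*}x}+\frac{x^{*}Cx}{x^{*}x}$.

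For the upper bound I would fix an $i$-dimensional subspace $V_0$ attaining the outer minimum for $B$, so that $\max_{0\neq x\in V_0}\frac{x^{*}Bx}{x^{*}x}=\lambda_i(B)$. Since the minimum over all $i$-dimensional subspaces in the formula for $B+C$ is at most the value on the particular subspace $V_0$, I obtain
\[
\lambda_i(B+C)\le \max_{\substack{x\in V_0\\ x\neq0}}\frac{x^{*}(B+C)x}{x^{*}x}
\le \max_{\substack{x\in V_0\\ x\neq0}}\frac{x^{*}Bx}{x^{*}x}+\lambda_n(C)
=\lambda_i(B)+\lambda_n(C),
\]
where the middle step uses $\frac{x^{*}Cx}{x^{*}x}\le\lambda_n(C)$. (Here the symbol $\lambda_k(C)$ appearing in the statement should be read as $\lambda_n(C)$, the largest eigenvalue of $C$.)

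For the lower bound I would insert the Rayleigh bound from below directly inside the min--max formula for $B+C$: for every $i$-dimensional subspace $V$ and every nonzero $x\in V$ we have $\frac{x^{*}(B+C)x}{x^{*}x}\ge \frac{x^{*}Bx}{x^{*}x}+\lambda_1(C)$, so taking the maximum over $x\in V$ and then the minimum over all such $V$ yields $\lambda_i(B+C)\ge \lambda_i(B)+\lambda_1(C)$. The only subtlety worth flagging is the asymmetry in how the Rayleigh bound enters: the upper bound requires a judiciously chosen fixed optimal subspace $V_0$, whereas the lower bound holds uniformly across all subspaces and so passes cleanly through both the inner $\max$ and the outer $\min$. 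Since this is the classical Weyl inequality recorded in Horn and Johnson \cite{Horn}, no real obstacle arises, and one could equally well replace the argument by a citation to \cite{Horn}.
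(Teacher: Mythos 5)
Your proof is correct. Note, though, that the paper itself offers no proof of this statement: it is recorded as a recalled classical result and attributed to Horn and Johnson \cite{Horn}, so there is nothing in the paper to compare your argument against. What you have written is the standard textbook derivation of Weyl's inequality from the Courant--Fischer min--max characterization, and both halves are sound: the lower bound passes the uniform Rayleigh estimate $\frac{x^{*}Cx}{x^{*}x}\ge\lambda_1(C)$ through the inner maximum and outer minimum, while the upper bound correctly requires fixing an optimal $i$-dimensional subspace $V_0$ for $B$ before applying $\frac{x^{*}Cx}{x^{*}x}\le\lambda_n(C)$. You are also right that the symbol $\lambda_k(C)$ in the statement is a typographical slip for $\lambda_n(C)$, the largest eigenvalue of $C$ (the index $k$ is used elsewhere in the paper for the half-length of the defining sequence and has no meaning here); as stated literally the inequality would in fact be false whenever $\lambda_k(C)<\lambda_n(C)$. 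Your closing remark that one could simply cite \cite{Horn} is exactly what the authors do.
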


\begin{theorem}[\textbf{Sylvester's  law of inertia}]
\label{Sylvester}
Let  $B$ and $C$  be Hermitian matrices of equal order.  Then $B$ and $C$ have the same inertia if and only if $B=SCS^{*}$ for some non-singular matrix $S$, where $S^{*}$ is the conjugate transpose of $S$.
\end{theorem}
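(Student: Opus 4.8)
The plan is to reduce the statement to a canonical form under $*$-congruence and to show that this canonical form is completely determined by the inertia. Write $B \sim C$ to mean $B = SCS^*$ for some nonsingular $S$; this relation is reflexive (take $S = I$), symmetric (replace $S$ by $S^{-1}$), and transitive (compose the congruences), so it is an equivalence relation. Granting this, the ``if'' direction becomes routine once a canonical representative is in hand, while the ``only if'' direction---that $*$-congruence preserves the triple $(n_-, n_0, n_+)$---carries the real content.

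First I would establish the canonical form. By the spectral theorem a Hermitian $C$ factors as $C = U \Lambda U^*$ with $U$ unitary and $\Lambda = \mathrm{diag}(\lambda_1, \ldots, \lambda_n)$ real. Scaling each coordinate by $|\lambda_i|^{-1/2}$ when $\lambda_i \neq 0$ (and by $1$ otherwise) and following with a suitable permutation, I obtain a nonsingular $S_C$ with
\[
S_C\, C\, S_C^* = E := \mathrm{diag}\big(I_{n_+(C)},\, -I_{n_-(C)},\, 0_{n_0(C)}\big).
\]
Thus every Hermitian matrix is $*$-congruent to the diagonal sign matrix $E$ read off from its inertia. For the ``if'' direction, suppose $B$ and $C$ share the inertia $(n_-, n_0, n_+)$. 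Then both are congruent to the same $E$, say $B = S_1 E S_1^*$ and $C = S_2 E S_2^*$; eliminating $E$ gives $B = (S_1 S_2^{-1}) C (S_1 S_2^{-1})^*$, so $B = SCS^*$ with $S = S_1 S_2^{-1}$ nonsingular.

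The substantive ``only if'' direction is where I would spend the effort. The key tool is the variational description of $n_+$: I claim that
\[
n_+(C) = \max\{ \dim W : x^* C x > 0 \text{ for all } x \in W \setminus \{0\} \}.
\]
The lower bound comes from taking $W$ to be the span of eigenvectors for the positive eigenvalues; the matching upper bound follows because any subspace of dimension exceeding $n_+(C)$ must, by a dimension count, meet the span of the eigenvectors for the nonpositive eigenvalues nontrivially, producing a nonzero vector on which the form is $\leq 0$. With this in hand, writing $B = SCS^*$ gives $x^* B x = (S^* x)^* C (S^* x)$, and since $S^*$ is a linear bijection it carries any positive subspace for $B$ to one of the same dimension for $C$, and conversely; hence $n_+(B) = n_+(C)$. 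Because $S$ is nonsingular, $\mathrm{rank}(B) = \mathrm{rank}(C)$, i.e. $n_+(B) + n_-(B) = n_+(C) + n_-(C)$, which forces $n_-(B) = n_-(C)$ and then $n_0(B) = n_0(C)$.

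I expect the main obstacle to be a clean justification of the variational characterization, in particular its upper bound, which is exactly the point where the Hermitian structure (realness of the eigenvalues and orthogonality of the eigenspaces) is used. An alternative to this step, perhaps more in the spirit of the Weyl inequality recalled above, is a continuity argument: connect $S$ to $I$ by a path $S(t)$ in the group of nonsingular matrices, observe that $C(t) = S(t) C S(t)^*$ is a continuous family of Hermitian matrices of constant rank, and conclude that no eigenvalue can cross $0$ along the path, so $n_+$ and $n_-$ remain constant from $C$ to $B$. Either route reaches the same conclusion.
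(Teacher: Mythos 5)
The paper does not prove this statement at all: it is recalled as a classical result, with a pointer to Horn and Johnson, and is used as a black box in the inertia computation for $Q_\pi(G)$. Your argument is the standard textbook proof and is correct in both directions: the reduction to the canonical sign matrix $E$ via the spectral theorem handles the ``if'' direction, and the variational characterization $n_+(C)=\max\{\dim W : x^*Cx>0 \text{ on } W\setminus\{0\}\}$ (with the upper bound coming from the dimension count against the span of the nonpositive eigenvectors) correctly shows that $*$-congruence preserves $n_+$, after which rank preservation pins down $n_-$ and $n_0$. The alternative continuity argument you sketch is also sound, since $GL_n(\mathbb{C})$ is path-connected and constant rank along the path prevents any eigenvalue from crossing zero. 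Nothing further is needed.
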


\begin{theorem}[\textbf{Ostrowski Theorem}]
\label{Ostrowski}
 Let $B,~S$ be matrices of order $n$ with $B$ Hermitian and $S$ non-singular. Let the eigenvalues of  $B,~SBS^{*}$, and $SS^{*}$ be arranged in non-decreasing order as in $(1)$. Let  $\sigma_1\geq \sigma_2 \cdots \geq \sigma_n$ be the singular values of $S$. Then for each $p=1,2, \ldots n$, there is positive real number $\theta_p \in[\sigma_n^2,~\sigma_1^2]$ such that $$\lambda_p(SBS^{*})=\theta_p \lambda_p(B) $$ 
\end{theorem}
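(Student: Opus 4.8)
The plan is to deduce the existence of $\theta_p$ from a two-sided bound: I will show that for every $p$ the number $\lambda_p(SBS^{*})$ lies in the closed interval with endpoints $\sigma_n^2\lambda_p(B)$ and $\sigma_1^2\lambda_p(B)$. Once this is established, dividing by $\lambda_p(B)$ when $\lambda_p(B)\neq 0$ produces a $\theta_p\in[\sigma_n^2,\sigma_1^2]$ with $\lambda_p(SBS^{*})=\theta_p\lambda_p(B)$, and the degenerate case $\lambda_p(B)=0$ is handled separately.

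The central tool is the Courant--Fischer min--max characterisation together with the congruence substitution $y=S^{*}x$. Since $S$ is nonsingular, $S^{*}S$ is Hermitian positive definite and its eigenvalues are exactly the squared singular values $\sigma_1^2\ge\cdots\ge\sigma_n^2$, so $\sigma_n^2=\lambda_{\min}(S^{*}S)$ and $\sigma_1^2=\lambda_{\max}(S^{*}S)$. Writing
$$\lambda_p(SBS^{*})=\min_{\dim V=p}\ \max_{0\neq x\in V}\frac{x^{*}SBS^{*}x}{x^{*}x}$$
and substituting $y=S^{*}x$ (a bijection of the $p$-dimensional subspaces, as $S^{*}$ is invertible, with $x^{*}x=y^{*}(S^{*}S)^{-1}y$) turns the Rayleigh quotient into $\frac{y^{*}By}{y^{*}(S^{*}S)^{-1}y}=R(y)\,c(y)$, where $R(y)=\frac{y^{*}By}{y^{*}y}$ is the Rayleigh quotient of $B$ and $c(y)=\frac{y^{*}y}{y^{*}(S^{*}S)^{-1}y}\in[\sigma_n^2,\sigma_1^2]$, the latter because $(S^{*}S)^{-1}$ has eigenvalues in $[\sigma_1^{-2},\sigma_n^{-2}]$. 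Hence $\lambda_p(SBS^{*})=\min_{\dim W=p}\max_{0\neq y\in W}R(y)c(y)$, which is to be compared with $\lambda_p(B)=\min_{\dim W=p}\max_{0\neq y\in W}R(y)$.

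Assume first $\lambda_p(B)\ge 0$. For the upper bound I take $W$ to be an optimal $p$-dimensional subspace for $B$, so that $R(y)\le\lambda_p(B)$ on $W$; then $R(y)c(y)\le\sigma_1^2\lambda_p(B)$ for every $y\in W$ (if $R(y)\ge0$ use $c(y)\le\sigma_1^2$, while if $R(y)<0$ then $R(y)c(y)<0\le\sigma_1^2\lambda_p(B)$), giving $\lambda_p(SBS^{*})\le\sigma_1^2\lambda_p(B)$. For the lower bound I use that every $p$-dimensional $W$ contains a $y_0$ with $R(y_0)\ge\lambda_p(B)\ge0$, whence $R(y_0)c(y_0)\ge\sigma_n^2\lambda_p(B)$, so $\max_{y\in W}R(y)c(y)\ge\sigma_n^2\lambda_p(B)$ for all $W$ and therefore $\lambda_p(SBS^{*})\ge\sigma_n^2\lambda_p(B)$. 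The case $\lambda_p(B)<0$ is entirely symmetric, with the roles of $\sigma_n^2$ and $\sigma_1^2$ interchanged, and in either case $\lambda_p(SBS^{*})$ lies between $\sigma_n^2\lambda_p(B)$ and $\sigma_1^2\lambda_p(B)$.

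It remains to treat $\lambda_p(B)=0$, where the variational estimate degenerates. Here I invoke Sylvester's law of inertia (Theorem \ref{Sylvester}): since $SBS^{*}$ is congruent to $B$, the two matrices have identical inertia, and under the non-decreasing ordering this forces $\lambda_p(SBS^{*})=0$ precisely when $\lambda_p(B)=0$; any $\theta_p\in[\sigma_n^2,\sigma_1^2]$ then satisfies $\lambda_p(SBS^{*})=\theta_p\lambda_p(B)=0$. The main obstacle is the sign bookkeeping in the product $R(y)c(y)$: the naive estimates $\sigma_n^2R(y)\le R(y)c(y)\le\sigma_1^2R(y)$ hold only for $R(y)\ge0$ and reverse for $R(y)<0$, so one must argue that the wrong-sign contributions are dominated by $\sigma_1^2\lambda_p(B)$ (respectively $\sigma_n^2\lambda_p(B)$). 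This domination is exactly where the hypothesis on the sign of $\lambda_p(B)$ enters, and it is the delicate point of the argument.
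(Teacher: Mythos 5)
The paper gives no proof of this statement: it is recalled as a classical result with a pointer to Horn and Johnson's \emph{Matrix Analysis}, so there is no in-paper argument to compare against. Your proof is correct and is essentially the standard Courant--Fischer argument found in that reference (the substitution $y=S^{*}x$, the factorisation of the Rayleigh quotient as $R(y)c(y)$ with $c(y)\in[\sigma_n^2,\sigma_1^2]$, the sign case analysis, and Sylvester's law of inertia to dispose of the case $\lambda_p(B)=0$), with all the delicate sign bookkeeping handled correctly.
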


We now compute the inertia of the $\mathcal{C}$-graph $G=C(\alpha_1,\alpha_2,\ldots,\alpha_{2k})$. 

\begin{theorem}
Let $G=C(\alpha_1,\alpha_2,\ldots,\alpha_{2k})$ be a  $\mathcal{C}$-graph. Then the inertia of the quotient matrix $Q_{\pi}(G)$ is given by
$$\big{(}n_{-}(Q_{\pi}(G)),~n_{0}(Q_{\pi}(G)),~n_{+}(Q_{\pi}(G))\big{)}=\big{(}k,~0,~k\big{)}.$$
\end{theorem}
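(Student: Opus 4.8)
The plan is to work entirely with the symmetric matrix $M=D^{\frac12}Q_{\pi}(G)D^{-\frac12}$ produced just above the statement, so that the eigenvalues of $Q_{\pi}(G)$ are real and $\operatorname{inertia}(Q_{\pi}(G))=\operatorname{inertia}(M)$. First I would record the exact sign pattern of $M$: from $M_{ij}=\sqrt{\alpha_i/\alpha_j}\,(Q_{\pi}(G))_{ij}$ one checks that $M$ is symmetric with $M_{ii}=\alpha_i-1$ for odd $i$, $M_{ii}=0$ for even $i$, and for $i\neq j$ one has $M_{ij}=\sqrt{\alpha_i\alpha_j}$ exactly when $\max(i,j)$ is even and $M_{ij}=0$ otherwise. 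Reordering the indices so that the $k$ odd labels precede the $k$ even labels then displays $M$ as a block matrix $\begin{bmatrix}A & B\\ B^{T} & C\end{bmatrix}$ with $A=\operatorname{diag}(\alpha_1-1,\alpha_3-1,\dots,\alpha_{2k-1}-1)\succeq 0$, with $C=ww^{T}-\operatorname{diag}(\alpha_2,\alpha_4,\dots,\alpha_{2k})$ where $w=(\sqrt{\alpha_2},\dots,\sqrt{\alpha_{2k}})^{T}$, and with a coupling block $B_{s,t}=\sqrt{\alpha_{2s-1}\alpha_{2t}}$ supported exactly on the pairs where the even label dominates the odd one.

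For the counting I would use a congruence, whose inertia‑invariance is guaranteed by Sylvester's law (Theorem~\ref{Sylvester}), or equivalently by Ostrowski's theorem (Theorem~\ref{Ostrowski}) since its multipliers $\theta_p$ are strictly positive and hence preserve signs. On the open region where every odd part satisfies $\alpha_{2i-1}>1$ the block $A$ is positive definite and contributes $k$ positive eigenvalues, so that $\operatorname{inertia}(M)=(0,0,k)$ plus the inertia of the Schur complement $C-B^{T}A^{-1}B$; the goal is then to show this $k\times k$ Schur complement is negative definite, which supplies the remaining $k$ negative eigenvalues and yields $(k,0,k)$. (As a cross‑check, for $k=1$ this Schur complement equals $-\alpha_1\alpha_2/(\alpha_1-1)<0$.) An alternative route to the same split is Weyl's inequality (Theorem~\ref{Weyl}) applied to $M=M^{(0)}-P$, where $M^{(0)}$ replaces $\alpha_i-1$ by $\alpha_i$ on the odd diagonal and $P$ is the rank‑$k$ orthogonal projection onto the odd coordinates; interlacing then confines $n_{+}$ and $n_{-}$.

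The main obstacle is the claim $n_0=0$, that is, $\det Q_{\pi}(G)\neq 0$ for \emph{every} choice of positive $\alpha_i$; the boundary cases $\alpha_{2i-1}=1$, where $A$ becomes singular and the clean Schur reduction degenerates, are precisely where the difficulty concentrates. I would establish nonsingularity directly by the row‑reduction technique of the previous theorem: the same operations that turned $Q_{\pi}(G)+I$ into a tridiagonal matrix $T$ turn $Q_{\pi}(G)$ itself into a tridiagonal matrix, and the determinant of the latter obeys a three‑term recurrence whose terms I expect to be of one sign, so that the determinant equals $(-1)^{k}$ times a positive combination of the $\alpha_i$ and never vanishes. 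Proving this sign‑definiteness of the tridiagonal recurrence is where essentially all the work lies.

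With nonsingularity in hand over the whole connected parameter set $(0,\infty)^{2k}$, the counting closes cleanly by continuity: the real eigenvalues of $Q_{\pi}(G)$ vary continuously with $(\alpha_1,\dots,\alpha_{2k})$, and since none may cross $0$, the pair $(n_{-},n_{+})$ is constant and can be evaluated at the convenient base point $\alpha_1=\cdots=\alpha_{2k}=1$, where $Q_{\pi}(G)$ is exactly the adjacency matrix of the antiregular graph $H_{2k}$ whose inertia is $(k,0,k)$. Thus the Schur/Weyl picture certifies the balanced $k$‑versus‑$k$ split, while the tridiagonal determinant computation is the hard ingredient that rules out a zero eigenvalue and simultaneously fixes the overall sign.
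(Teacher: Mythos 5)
Your structural analysis of $M=D^{\frac12}Q_{\pi}(G)D^{-\frac12}$ (the sign pattern, the block form after separating odd and even indices, the identification of the even--even block as $ww^{T}-\operatorname{diag}(\alpha_2,\ldots,\alpha_{2k})$) is correct, but the proof is not actually carried out: the two statements that would deliver the theorem --- negative definiteness of the $k\times k$ Schur complement $C-B^{T}A^{-1}B$ for general $k$, and nonvanishing of $\det Q_{\pi}(G)$ on the whole parameter region needed for your continuity argument --- are both left as open claims. You verify the Schur complement only for $k=1$, and you explicitly concede that the sign-definiteness of the tridiagonal recurrence ``is where essentially all the work lies.'' Since $n_{0}=0$ is precisely the content that must be proved (the $k$-versus-$k$ split then follows by counting), deferring it means the argument has no complete path to the conclusion; the boundary cases $\alpha_{2i-1}=1$, where your block $A$ is singular, remain unhandled, and the connectedness device can only repair them \emph{after} the determinant is shown nonzero. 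The secondary route via Weyl applied to $M=M^{(0)}-P$ is likewise undeveloped: subtracting a rank-$k$ projection shifts each eigenvalue by at most $1$, and without knowing the inertia of $M^{(0)}$ this confines nothing.

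It is worth seeing how the paper sidesteps all of this. Writing $Q_{\pi}(G)=D^{\frac12}(A_{2k}+D')D^{\frac12}$ with $D'=\operatorname{diag}\bigl[\tfrac{\alpha_1-1}{\alpha_1},0,\ldots,\tfrac{\alpha_{2k-1}-1}{\alpha_{2k-1}},0\bigr]$, Sylvester's law (a genuine congruence, not just a similarity) reduces the problem to the inertia of $A_{2k}+D'$. Because $H_{2k}$ has inertia $(k,0,k)$ with $\lambda_{k}(A_{2k})\le-1$, and every eigenvalue of $D'$ lies in $[0,1)$, Weyl's inequalities give $\lambda_{k+1}(A_{2k}+D')\ge\lambda_{k+1}(A_{2k})>0$ and $\lambda_{k}(A_{2k}+D')\le\lambda_{k}(A_{2k})+\max_i\tfrac{\alpha_{2i-1}-1}{\alpha_{2i-1}}<-1+1=0$, yielding $k$ positive and $k$ negative eigenvalues with no room for a zero. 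The nonvanishing of the determinant thus comes out as a corollary rather than being the prerequisite your plan requires. If you want to salvage your approach, the cleanest fix is to replace the determinant computation by exactly this perturbation argument.
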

\begin{proof}
The quotient matrix of the graph $G$ is
 $$Q_{\pi}(G)=\begin{bmatrix}
(\alpha_1-1) &\alpha_2 &0 &\alpha_4&0 &\alpha_6& \ldots & \alpha_{2k} \\
\alpha_1&0 &0 &\alpha_4&0&\alpha_6& \ldots &\alpha_{2k}\\
0 &0&(\alpha_3-1)&\alpha_4 &0 &\alpha_6& \ldots &\alpha_{2k}\\
\alpha_1&\alpha_2&\alpha_3&0&0&\alpha_6&\ldots&\alpha_{2k}\\
0&0&0&0&(\alpha_5-1)&\alpha_6&\ldots&\alpha_{2k}\\
\alpha_1&\alpha_2&\alpha_3&\alpha_4&\alpha_5&0&\ldots&\alpha_{2k}\\
& & & & & & \ddots \\
\alpha_1&\alpha_2&\alpha_3&\alpha_4&\alpha_5&\alpha_6& \ldots & 0
\end{bmatrix}.$$
Consider the diagonal matrices: $$D=diag[\alpha_1, \alpha_2, \alpha_3, \ldots, \alpha_{2k} ],\ D{'}=diag\Big[\frac{\alpha_1-1}{\alpha_1}, 0, \frac{\alpha_3-1}{\alpha_3},0 \ldots, \frac{\alpha_{2k-1}-1}{\alpha_{2k-1}}, 0\Big].$$ Then $Q_{\pi}(G)$ can be expressed as
\begin{equation}
\label{Cgraphs_eq2}
Q_{\pi}(G)=D^{\frac{1}{2}}(A_{2k}+D{'})D^{\frac{1}{2}},
\end{equation}
where $A_{2k}$ denotes the adjacency matrix of the antiregular graph $H_{2k}$.
Let $C=(A_{2k}+D{'})$. By Theorem \ref{Sylvester}, $Q_{\pi}(G)$ and $C$ both have the same inertia. 

 Let the eigenvalues of $Q_{\pi}(G),~A_{2k},~D,~D{'},$ and $C$ be arranged in non-decreasing order as in $(1)$.
It is well-known (see \cite{Bapat}) that the antiregular graph $H_{2k}$ has the inertia $\big{(}k,~0,~k\big{)}$.\\
By Theorem \ref{Weyl}, we have
$$\lambda_{k+1}(C)\geq \lambda_{k+1}(A_{2k})+\lambda_1(D{'})>0.$$
And
$$\lambda_{k}(C)\leq \lambda_{k}(A_{2k})+\lambda_{2k}(D{'})<0,$$
since $\lambda_{k}(A_{2k})\leq-1$ and all the eigenvalues of $D{'}$ lie in the interval $[0, 1)$. \\
This completes the proof.
\end{proof}
As a consequence of this theorem, we have the following corollaries.
\begin{corollary}
For the $\mathcal{C}$-graph $G=C(\alpha_1,\alpha_2,\ldots,\alpha_{2k})$, the inertia is given by
$$\big{(}n_{-}(A),~n_{0}(A),~n_{+}(A)\big{)}=\big{(}\sum_{i=1}^{k}\alpha_{2i-1},~\sum_{i=1}^{k}\alpha_{2i}-k,~k\big{)}$$
\end{corollary}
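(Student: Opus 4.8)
The plan is to produce an explicit orthogonal eigenbasis of $A$ by combining the lifted eigenvectors of the quotient matrix $Q_{\pi}(G)$ with the ``local'' eigenvectors supplied by Lemmas~\ref{Cgraphs_eig1_lm1} and~\ref{Cgraphs_eig0_lm1}, and then reading off the signs. First I would record the combinatorial structure of the classes $C_i$. Tracing the construction (equivalently, reading the diagonal of $Q_{\pi}(G)$), the class $C_i$ is introduced as a clique $K_{\alpha_i}$ and is thereafter complemented $2k-i+1$ times, so $C_i$ induces a clique when $i$ is odd and an independent set when $i$ is even. Moreover, since all $\alpha_i$ vertices of $C_i$ are introduced simultaneously, they are mutual twins: every vertex outside $C_i$ is adjacent either to all or to none of them, so each $C_i$ satisfies the hypothesis of the appropriate lemma.

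Applying Lemma~\ref{Cgraphs_eig1_lm1} to each odd (clique) class $C_{2i-1}$ yields $\alpha_{2i-1}-1$ orthogonal eigenvectors for $-1$, and applying Lemma~\ref{Cgraphs_eig0_lm1} to each even (independent) class $C_{2i}$ yields $\alpha_{2i}-1$ orthogonal eigenvectors for $0$. This produces at least $\sum_{i=1}^{k}(\alpha_{2i-1}-1)=\sum_{i=1}^{k}\alpha_{2i-1}-k$ eigenvectors for $-1$ and at least $\sum_{i=1}^{k}\alpha_{2i}-k$ eigenvectors for $0$. Independently, the previous theorem gives that $Q_{\pi}(G)$ has inertia $(k,0,k)$, and each eigenvector $X$ of $Q_{\pi}(G)$ lifts to an eigenvector $P_{\pi}X$ of $A$ with the same eigenvalue; this contributes $k$ eigenvectors with negative eigenvalue and $k$ with positive eigenvalue.

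The key observation is that these three families are mutually orthogonal. Each local eigenvector is supported on a single class and its entries sum to zero there (this is immediate from the explicit form in Lemma~\ref{Cgraphs_eig1_lm1}), whereas every lifted vector $P_{\pi}X$ is constant on each class; hence their inner product vanishes. Local eigenvectors attached to different classes have disjoint support, and within a class they are orthogonal by the lemmas. Counting the three families gives
$$\Big(\sum_{i=1}^{k}\alpha_{2i-1}-k\Big)+\Big(\sum_{i=1}^{k}\alpha_{2i}-k\Big)+2k=\sum_{i=1}^{2k}\alpha_i=n,$$
so I will have exhibited $n$ mutually orthogonal eigenvectors of the symmetric matrix $A$, i.e.\ a complete eigenbasis. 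Consequently every inequality above is an equality, and the inertia is obtained by tallying signs: the negative eigenvalues are the $\sum_{i=1}^{k}\alpha_{2i-1}-k$ copies of $-1$ together with the $k$ negative eigenvalues of $Q_{\pi}(G)$, giving $n_{-}(A)=\sum_{i=1}^{k}\alpha_{2i-1}$; the zero eigenvalues are exactly the $\sum_{i=1}^{k}\alpha_{2i}-k$ coming from the even classes (note $Q_{\pi}(G)$ contributes no zero eigenvalue), so $n_{0}(A)=\sum_{i=1}^{k}\alpha_{2i}-k$; and the positive eigenvalues are precisely the $k$ positive eigenvalues of $Q_{\pi}(G)$, so $n_{+}(A)=k$.

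I expect the point needing the most care to be the verification that the local and lifted families are genuinely independent, so that the multiplicity lower bounds are tight and nothing is double-counted. The orthogonality argument above, resting on the fact that the Lemma~\ref{Cgraphs_eig1_lm1} eigenvectors have zero class-sum while each $P_{\pi}X$ is class-constant, is exactly what closes this gap and promotes the inequalities to equalities.
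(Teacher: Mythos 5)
Your argument is correct and is essentially the route the paper intends: the corollary is stated without its own proof, but it follows by exactly your counting — the $(k,0,k)$ inertia of $Q_{\pi}(G)$, the zero-class-sum eigenvectors supplied by the two twin-vertex lemmas, and the orthogonality of each lifted vector $P_{\pi}X$ (class-constant) to those local vectors, which is the same observation the paper itself invokes when computing $m_0$ and $m_{-1}$. The only step worth making explicit is that the $2k$ lifted vectors are themselves linearly independent (e.g.\ because $Q_{\pi}(G)$ is diagonalizable and $P_{\pi}$ has full column rank, or by choosing the $D$-orthogonal eigenbasis coming from the symmetric matrix $D^{1/2}Q_{\pi}(G)D^{-1/2}$), so that your $n$ vectors genuinely form a basis and the inequalities become equalities.
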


\begin{corollary}
Let $m(G)$ denote the number of distinct eigenvalue of the adjacency matrix $A$ of a $\mathcal{C}$-graph $G=C(\alpha_1,\alpha_2,\ldots,\alpha_{2k})$. Then 
$ m(G) \leq 2k+2. $
\end{corollary}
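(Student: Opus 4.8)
The plan is to show that every eigenvalue of $A$ is either an eigenvalue of the $2k\times 2k$ quotient matrix $Q_\pi(G)$ or else equals $-1$ or $0$. Since $Q_\pi(G)$ can contribute at most $2k$ distinct values, adjoining the two trivial eigenvalues will yield $m(G)\le 2k+2$.

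First I would pin down the internal structure of the cells $C_i$. Tracking the parity of the number of complementations applied to a cell after it is introduced as a clique $K_{\alpha_i}$, one checks that every odd cell $C_{2i-1}$ ends up as a clique and every even cell $C_{2i}$ ends up as an independent set, and in either case all vertices of the cell share the same set of neighbours outside the cell. Hence Lemma \ref{Cgraphs_eig1_lm1} applied to each odd cell produces $\sum_{i=1}^{k}(\alpha_{2i-1}-1)$ eigenvectors for the eigenvalue $-1$, and Lemma \ref{Cgraphs_eig0_lm1} applied to each even cell produces $\sum_{i=1}^{k}(\alpha_{2i}-1)$ eigenvectors for the eigenvalue $0$. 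Each such eigenvector is supported on a single cell and sums to zero on that cell.

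Next I would bring in the quotient matrix. Because $AP_\pi=P_\pi Q_\pi(G)$ and $Q_\pi(G)$ is diagonalizable with $2k$ real eigenvalues, each eigenvector $X$ of $Q_\pi(G)$ lifts to an eigenvector $P_\pi X$ of $A$ for the same eigenvalue, and these lifted vectors are constant on every cell. A dimension-and-independence check then closes the argument: a vector that is constant on each cell is orthogonal to any vector that is supported on one cell and sums to zero there, and within each cell the Lemma vectors are independent while across cells their supports are disjoint. Consequently the two families are linearly independent, and their total count is $2k+\sum_{i=1}^{k}(\alpha_{2i-1}-1)+\sum_{i=1}^{k}(\alpha_{2i}-1)=n$. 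They therefore form a complete eigenbasis of $A$, so the spectrum of $A$ is contained in $\mathrm{spec}(Q_\pi(G))\cup\{-1,0\}$, giving the claimed bound.

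The routine part is the counting; the step needing the most care is verifying completeness, namely that the two eigenvector families are independent and together exhaust $\mathbb{R}^n$. This is exactly where the \emph{constant on cells} versus \emph{zero-sum on each cell} dichotomy does the work, preventing any hidden eigenvalue of $A$ from escaping the list. I note finally that I never need to decide whether $-1$ or $0$ is itself an eigenvalue of $Q_\pi(G)$: any such coincidence merely merges values and so can only decrease $m(G)$, leaving the inequality $m(G)\le 2k+2$ intact.
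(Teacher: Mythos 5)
Your proof is correct and takes essentially the route the paper leaves implicit (the corollary is stated without its own proof): every eigenvalue of $A$ is either an eigenvalue of the $2k\times 2k$ quotient matrix $Q_\pi(G)$ or one of the trivial eigenvalues $0,-1$ produced by Lemmas \ref{Cgraphs_eig0_lm1} and \ref{Cgraphs_eig1_lm1}, and the count $2k+\sum_{i}(\alpha_i-1)=n$ shows nothing else can occur. Your completeness check --- cell-constant lifted vectors being orthogonal to the cell-supported zero-sum eigenvectors --- is the same device the paper invokes in Case II of the proof of Theorem \ref{Cgraphs_multiplicity_th1}, so no further comparison is needed.
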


In light of Theorem \ref{alpha111} and Lemmas \ref{Cgraphs_eig1_lm1} and \ref{Cgraphs_eig0_lm1}, we emphasize the following result which counts the frequency of occurrence of the eigenvalues $0,~-1$.
\begin{corollary}
Let $m_0$ and $m_{-1}$ denote the multiplicities of $0$ and $-1$ as an eigenvalue of $A$ of a $\mathcal{C}$-graph $G=C(\alpha_1,\alpha_2,\ldots,\alpha_{2k})$. Then 
$$m_{0}=\sum_{i=1}^{k}\alpha_{2i}-k~~~\text{and}~~~m_{-1}=\begin{cases}
\sum_{i=1}^{k}\alpha_{2i-1}-k,&\text{if }\alpha_2\neq1,\\
\sum_{i=1}^{k}\alpha_{2i-1}-k+1,&\text{if }\alpha_2=1.
\end{cases}$$
\end{corollary}

\section{Eigenvalue-free interval for $\mathcal{C}$-graphs}
We now examine the important problem of determining an eigenvalue-free interval for $\mathcal{C}$-graphs. For a cograph $G$, the eigenvalues $0$ and $-1$ are called the trivial eigenvalues  and all other eigenvalues are called non-trivial eigenvalues. We are interested to find out an interval which does not contain any non-trivial eigenvalue of a $\mathcal{C}$-graph. Let $\lambda_{+}(G)$ be the smallest positive eigenvalue of $G$ and $\lambda^{-}(G)$ be the largest eigenvalue of $G$  smaller than $-1$. First we recall a result due to Aguilar et al.
\begin{lemma}{\cite{Aguilar2}}
\label{lemma1}
Let $n$ be a positive integer. Then
$$\lambda^{-}(H_n)<\frac{-1-\sqrt{2}}{2},\text{ and } \lambda_{+}(H_n)>\frac{-1+\sqrt{2}}{2}.$$

In other words, the closed interval $\big{[}\frac{-1-\sqrt{2}}{2}, \frac{-1+\sqrt{2}}{2}\big{]}$ does not contain any non-trivial eigenvalue of the antiregular graph $H_n$.
\end{lemma}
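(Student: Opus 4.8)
The plan is to work with the concrete model of the antiregular graph in which $V(H_n)=\{1,2,\ldots,n\}$ and $i\sim j$ exactly when $i+j>n$; a direct check of the degree sequence (vertex $i$ has degree $i$ or $i-1$) confirms this is $H_n$. First I would rewrite the eigenvalue equation $A_nx=\lambda x$ in terms of the suffix sums $T_m=\sum_{j=m}^{n}x_j$ (with $T_{n+1}=0$). Since vertex $i$ is adjacent precisely to $n-i+1,\ldots,n$, and the self-loop term $-x_i$ must be removed exactly when $i$ lies in that range, the equation at vertex $i$ collapses to
\begin{equation*}
T_{n-i+1}=(\lambda+\epsilon_i)\,(T_i-T_{i+1}),\qquad \epsilon_i=\begin{cases}0,& 2i\le n,\\ 1,& 2i>n.\end{cases}
\end{equation*}
Writing this identity at the reflected index $n+1-i$ as well and eliminating the second-half sums, I obtain a single constant-coefficient recurrence for the first-half suffix sums,
\begin{equation*}
c\,T_{i+1}-(2c-1)\,T_i+c\,T_{i-1}=0,\qquad c:=\lambda(\lambda+1).
\end{equation*}

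The point of this reduction is that the critical numbers appear automatically: the endpoints $\frac{-1\pm\sqrt{2}}{2}$ are precisely the roots of $4\lambda^2+4\lambda-1=0$, that is, of $c=\tfrac14$, so the closed interval $\big[\frac{-1-\sqrt{2}}{2},\frac{-1+\sqrt{2}}{2}\big]$ is exactly $\{\lambda:c\le\tfrac14\}$. The characteristic equation of the recurrence is $c t^2-(2c-1)t+c=0$, with discriminant $1-4c$ and product of roots $1$. Hence the roots lie on the unit circle, giving genuinely oscillatory solutions $T_i\sim\cos(i\theta+\varphi)$ with $\cos\theta=1-\frac{1}{2c}$, if and only if $c>\tfrac14$; otherwise they are real and reciprocal, and for $0<c<\tfrac14$ both are negative (one of modulus $>1$, one of modulus $<1$). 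The next step is to impose the boundary data: the terminal condition $T_{n+1}=0$ together with the center relation $T_{k+1}=\frac{\lambda}{\lambda+1}T_k$ obtained from the equation at $i=k$ (for $n=2k$), which couples the two halves. These turn the existence of a nonzero eigenvector into a quantization condition of the form $\sin(\,\cdot\,\theta)=0$, whose solutions are interior angles $\theta\in(0,\pi)$; since every admissible $\theta$ is strictly less than $\pi$, the associated $c$ is strictly greater than $\tfrac14$, forcing $\lambda$ strictly outside the interval and yielding the strict inequalities for $\lambda^{-}(H_n)$ and $\lambda_{+}(H_n)$.

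I would finish by ruling out the non-oscillatory regime directly. When $c\le\tfrac14$ the general solution is a combination of two real reciprocal exponentials, and I would show that such a sequence cannot meet the Dirichlet-type terminal condition and the center relation simultaneously without being identically zero, so no non-trivial eigenvalue can have $c\le\tfrac14$. The sub-interval $(-1,0)$, where $c<0$, is already eigenvalue-free for every cograph, so only $0<c<\tfrac14$ genuinely needs this argument; the odd case $n=2k+1$ is treated identically after accounting for the single central vertex at which $\epsilon_i$ switches. The main obstacle I anticipate is exactly this last step: extracting the precise quantization condition from the boundary and center equations and proving rigorously that it has no root in the hyperbolic regime $c\le\tfrac14$. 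The cleanest route is an oscillation/Sturm-type argument showing that the number of sign changes of $(T_i)$ demanded by the boundary conditions can be realized only when the characteristic roots are complex, i.e.\ only when $c>\tfrac14$.
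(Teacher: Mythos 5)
First, note that the paper does not prove this lemma at all: it is imported verbatim from the cited reference [Aguilar2], so there is no in-paper argument to compare against. Your reduction itself is correct and is essentially the standard route in the literature on antiregular/threshold graphs: with the model $i\sim j\iff i+j>n$, the suffix sums $T_m$ do satisfy $T_{n-i+1}=(\lambda+\epsilon_i)(T_i-T_{i+1})$, pairing $i$ with $n+1-i$ does yield $cT_{i+1}-(2c-1)T_i+cT_{i-1}=0$ with $c=\lambda(\lambda+1)$, and the interval in question is exactly $\{\lambda: c\le\tfrac14\}$ because the characteristic roots $t_\pm$ satisfy $t_+t_-=1$ and become non-real precisely when $c>\tfrac14$. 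This is a genuinely attractive framing.

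The gap is that the proposal stops exactly where the content of the lemma begins: you never actually exclude the regime $0<c\le\tfrac14$, and you say so yourself. Two specific points need repair. (i) The boundary data are misidentified. The recurrence you derived holds only for first-half indices $2\le i\le k$ (it uses $\epsilon_i=0$ and $\epsilon_{n+1-i}=1$), so the relevant conditions on $T_1,\dots,T_{k+1}$ are not ``$T_{n+1}=0$'' (which is vacuously true by definition of $T$) but a Robin-type condition $(1-c)T_1+cT_2=0$ obtained by combining the equations at vertices $1$ and $n$, together with the center relation $(\lambda+1)T_{k+1}=\lambda T_k$. With Robin conditions at both ends the quantization condition is not of the form $\sin(\cdot\,\theta)=0$, so that part of the sketch would not survive as written. (ii) The exclusion itself can be made to work, but you must actually do it: for $0<c<\tfrac14$ both roots are negative, so $u_i:=(-1)^iT_i=\alpha r^i+\beta r^{-i}$ with $r>1$; the left condition forces $u_2/u_1=\tfrac1c-1>r+r^{-1}$, which pins down the signs of $\alpha,\beta$ and yields $u_{i+1}\ge r\,u_i>0$, i.e.\ strict sign alternation of $(T_i)$; meanwhile for every $\lambda$ with $c\in(0,\tfrac14]$ and $\lambda\notin(-1,0)$ the ratio $\lambda/(\lambda+1)$ is positive, so the center relation forces $T_k$ and $T_{k+1}$ to have the same sign --- a contradiction. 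The endpoint $c=\tfrac14$ (double root $-1$, solutions $(-1)^i(\alpha+\beta i)$) and the odd case $n=2k+1$ (central self-paired vertex) each need their own short verification to get the \emph{strict} inequalities claimed. Until these steps are written out, the proposal is a plausible plan rather than a proof.
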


We are now in a position to establish the main result of this section.
\begin{theorem}
Consider the $\mathcal{C}$-graph $G=C(\alpha_1,\alpha_2,\ldots,\alpha_{2k})$. Then 
$$\lambda^{-}(G)<\frac{-1-\sqrt{2}}{2},\text{ and }\lambda_{+}(G)>\frac{-1+\sqrt{2}}{2}\alpha_{min},$$
where $\alpha_{min}=\displaystyle \min_{1\leq i\leq 2k}\{\alpha_i\}$.
\end{theorem}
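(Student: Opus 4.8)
The plan is to leverage the factorization $Q_\pi(G) = D^{1/2}(A_{2k} + D')D^{1/2}$ established in equation~\eqref{Cgraphs_eq2}, together with the Ostrowski Theorem~\ref{Ostrowski} and the known eigenvalue-free interval for the antiregular graph from Lemma~\ref{lemma1}. Since every eigenvalue of $Q_\pi(G)$ is an eigenvalue of the adjacency matrix $A$, and since the non-trivial eigenvalues of $G$ coincide with those of $Q_\pi(G)$ (the trivial eigenvalues $0$ and $-1$ account for the remaining multiplicity, as computed in Theorem~\ref{Cgraphs_multiplicity_th1}), it suffices to control the eigenvalues of $Q_\pi(G)$ rather than of the full matrix $A$.

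First I would handle the negative side. I would write $C = A_{2k} + D'$ and observe that $D'$ is a nonnegative diagonal matrix with entries in $[0,1)$. By Weyl's inequality~\ref{Weyl}, the $k$-th smallest eigenvalue satisfies $\lambda_k(C) \le \lambda_k(A_{2k}) + \lambda_{2k}(D') < \lambda_k(A_{2k}) + 1 \le \lambda^{-}(H_{2k}) + 1$. Invoking Lemma~\ref{lemma1}, $\lambda^{-}(H_{2k}) < \frac{-1-\sqrt2}{2}$, and I must then verify the bound propagates through the congruence $Q_\pi(G) = D^{1/2} C D^{1/2}$. Because $D^{1/2}$ is positive definite, Ostrowski's theorem gives $\lambda_p(Q_\pi(G)) = \theta_p \lambda_p(C)$ with $\theta_p \in [\sigma_{\min}^2, \sigma_{\max}^2]$, i.e.\ $\theta_p \in [\alpha_{\min}, \alpha_{\max}]$. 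For a negative eigenvalue, multiplying by $\theta_p \ge \alpha_{\min} \ge 1$ makes it more negative, so $\lambda^{-}(G) = \lambda_k(Q_\pi(G)) \le \alpha_{\min}\,\lambda_k(C)$, and since $\lambda_k(C) < \frac{-1-\sqrt2}{2} < 0$ with $\alpha_{\min}\ge 1$, the bound $\lambda^{-}(G) < \frac{-1-\sqrt2}{2}$ follows.

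For the positive side I would argue symmetrically. The least positive eigenvalue of $Q_\pi(G)$ is $\lambda_{k+1}(Q_\pi(G))$. Weyl's inequality gives $\lambda_{k+1}(C) \ge \lambda_{k+1}(A_{2k}) + \lambda_1(D') \ge \lambda_{k+1}(A_{2k}) = \lambda_{+}(H_{2k}) > \frac{-1+\sqrt2}{2}$ by Lemma~\ref{lemma1}. Then by Ostrowski, $\lambda_{+}(G) = \lambda_{k+1}(Q_\pi(G)) = \theta_{k+1}\,\lambda_{k+1}(C) \ge \alpha_{\min}\,\lambda_{k+1}(C) > \frac{-1+\sqrt2}{2}\,\alpha_{\min}$, since the quantity being scaled is positive and $\theta_{k+1} \ge \alpha_{\min}$. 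This yields exactly the asymmetric bound $\lambda_{+}(G) > \frac{-1+\sqrt2}{2}\,\alpha_{\min}$ claimed in the statement.

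The main obstacle I anticipate is the bookkeeping of eigenvalue \emph{indices} under the three distinct orderings (of $A_{2k}$, of $C$, and of $Q_\pi(G)$) and making sure the congruence preserves the sign pattern in exactly the way the inertia computation $(k,0,k)$ demands—so that $\lambda_k$ is genuinely the largest eigenvalue below $-1$ and $\lambda_{k+1}$ is genuinely the least positive one, with no trivial eigenvalue interfering. One subtlety is that Ostrowski's multiplier $\theta_p$ scaling a negative number by something $\ge 1$ pushes it further from zero only because $\alpha_{\min}\ge 1$; I would make this monotonicity-of-sign step explicit rather than routine, since it is where the factor $\alpha_{\min}$ enters and why the interval becomes genuinely wider than the previously known $(-1,0)$.
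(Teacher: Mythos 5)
Your argument for the positive bound $\lambda_{+}(G)>\frac{-1+\sqrt{2}}{2}\alpha_{min}$ is essentially the paper's: Weyl applied to $C=A_{2k}+D'$ with $\lambda_1(D')=0$, followed by Ostrowski through the congruence $Q_\pi(G)=D^{1/2}CD^{1/2}$. That half is fine. The negative half, however, has a genuine gap. Your chain $\lambda_k(C)\le\lambda_k(A_{2k})+\lambda_{2k}(D')<\lambda_k(A_{2k})+1\le\lambda^{-}(H_{2k})+1$ fails at the last step: $\lambda_k(A_{2k})=-1$ while $\lambda^{-}(H_{2k})=\lambda_{k-1}(A_{2k})<\frac{-1-\sqrt{2}}{2}<-1$, so the inequality $\lambda_k(A_{2k})\le\lambda^{-}(H_{2k})$ is backwards. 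And even if you repair the index (the eigenvalue you actually need to control is $\lambda_{k-1}(C)$ when $\alpha_2=1$, since then $-1$ itself occupies the slot $\lambda_k(Q_\pi(G))$), Weyl's upper bound only yields $\lambda_{k-1}(C)\le\lambda_{k-1}(A_{2k})+\lambda_{2k}(D')<\frac{-1-\sqrt{2}}{2}+\bigl(1-\tfrac{1}{\alpha_{max}}\bigr)$, which is nowhere near $\frac{-1-\sqrt{2}}{2}$. The asymmetry is structural: on the positive side the relevant Weyl term is $\lambda_1(D')=0$, but on the negative side it is $\lambda_{2k}(D')$, which can be close to $1$, so the perturbation by $D'$ genuinely destroys the bound in that direction. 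Subsequently scaling by $\theta_p\ge\alpha_{min}\ge1$ cannot rescue a number that is not yet below $\frac{-1-\sqrt{2}}{2}$.

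The paper avoids this by switching tools entirely for the negative side: it observes that $H_{2k}$ is always an induced subgraph of $G$ (and $H_{2k+1}$ as well when $\alpha_2\neq1$), and applies Cauchy interlacing directly to $A$, so that the appropriate eigenvalue of $A$ is bounded above by $\lambda_{k-1}(A_{2k})$ (resp.\ $\lambda_k(A_{2k+1})$), each of which is below $\frac{-1-\sqrt{2}}{2}$ by Lemma \ref{lemma1}. The two cases $\alpha_2=1$ and $\alpha_2\neq1$ are treated separately precisely to sort out which index realizes $\lambda^{-}(G)$ --- the bookkeeping issue you flagged as an anticipated obstacle but did not resolve. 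To fix your proof, replace the Weyl--Ostrowski argument for $\lambda^{-}(G)$ with this interlacing argument.
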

\begin{proof} 
First observe that any nontrivial eigenvalue of $G$ must be an eigenvalue of the quotient matrix $Q_\pi(G)$.
Recall the representation of $Q_\pi(G)$ described in equation (\ref{Cgraphs_eq2}):
$$Q_{\pi}(G)=D^{\frac{1}{2}}(A_{2k}+D{'})D^{\frac{1}{2}},$$
where $$D=diag[\alpha_1, \alpha_2, \alpha_3, \ldots, \alpha_{2k} ],\text{ and } D{'}=diag\bigg{[}\frac{\alpha_1-1}{\alpha_1}, 0, \frac{\alpha_3-1}{\alpha_3},0 \ldots, \frac{\alpha_{2k-1}-1}{\alpha_{2k-1}}, 0\bigg{]}.$$
Let us now recall the expressions $(1)$ and (2). \\
By Theorem \ref{Ostrowski}, we have 
$$\lambda_{k}(Q_{\pi}(G))=\theta_{k} \lambda_k (A_{2k}+D{'}),$$
where $\theta_k \in [\alpha_{min},~\alpha_{max}]$, and $\alpha_{max}=\displaystyle \max_{1\leq i\leq 2k} \{\alpha_{i}\}$. \\

Therefore by Lemma $\ref{lemma1}$ and using Theorem \ref{Sylvester}, 

$$\lambda_{k+1}(Q_{\pi}(G))=\theta_{k+1} \lambda_{k+1} (A_{2k}+D{'})>\frac{-1+\sqrt{2}}{2}\theta_{k+1}.$$

Thus
$$ \lambda_{k+1}(Q_{\pi}(G))>\frac{-1+\sqrt{2}}{2}\alpha_{min}. $$

That is
$$\lambda_{+}(G)>\frac{-1+\sqrt{2}}{2}\alpha_{min}.$$
\medskip
 
We now prove $\lambda^{-}(G)<\frac{-1-\sqrt{2}}{2}$.\\
First note that the $\mathcal{C}$-graph $C(1,1,\ldots,1)$ and $C(1,2,1,\ldots,1)$ are antiregular graphs on $2k$ and $2k+1$ vertices respectively. So clearly $H_{2k}$ is a subgraph of $G$ and further if $\alpha_2\neq1$, then $H_{2k+1}$ is also a subgraphs of $G$. Let the eigenvalues of $A_{2k}$ and $A_{2k+1}$ be arranged in non-decreasing order as given in $(1)$. Then we have 
$$\lambda_{k}(A_{2k})=-1,\text{ and }\lambda_{k-1}(A_{2k})<\frac{-1-\sqrt{2}}{2}.$$ 
Also we have,
$$\lambda_{k+1}(A_{2k+1})=0,\text{ and } \lambda_{k}(A_{2k+1})<\frac{-1-\sqrt{2}}{2}. $$

We consider two cases:\\
\textbf{Case 1.} $\alpha_2=1$. Then $-1$ is the largest negative eigenvalue $G$, and so $\lambda^-(G)=\lambda_{k-1}(A)$. Since $H_{2k}$ is a subgraph of $G$,  by the interlacing theorem 
$$\lambda_{k-1}(A) \leq \lambda_{k-1}(A_{2k})<\frac{-1-\sqrt{2}}{2}.$$ 
\textbf{Case 2.} $\alpha_2 \neq1$. So $\lambda_{k}(A)<-1$. Since $H_{2k+1}$ is a subgraph of $G$, by the interlacing theorem
$$\lambda_{k}(A) \leq \lambda_{k}(A_{2k+1})<\frac{-1-\sqrt{2}}{2}.$$ 
Combining above two cases we conclude that $\lambda^{-}(G)<\frac{-1-\sqrt{2}}{2}$.\\
 Hence the theorem is proved.
\end{proof}

\begin{corollary}

For the $\mathcal{C}$-graph $G=C(\alpha_1,\alpha_2,\ldots,\alpha_{2k})$, the interval  $\big{[}\frac{-1-\sqrt{2}}{2}, \frac{-1+\sqrt{2}}{2}\alpha_{min} \big{]}$ does not contain any non-trivial eigenvalue of $G$.

\end{corollary}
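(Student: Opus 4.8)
The plan is to derive this directly from the preceding theorem, which already localizes the extreme non-trivial eigenvalues. Recall that a non-trivial eigenvalue is, by definition, any eigenvalue of $G$ different from $0$ and $-1$. The first step is to argue that every such eigenvalue must lie either in $(0,\infty)$ or in $(-\infty,-1)$, so that the only remaining region to rule out is the open interval $(-1,0)$.

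To close off that region I would invoke the result of Mohammadian and Trevisan \cite{Trevisan 1}, cited in the introduction, which guarantees that $(-1,0)$ is an eigenvalue-free interval for every cograph, and in particular for the $\mathcal{C}$-graph $G$. Combined with the fact that a non-trivial eigenvalue is distinct from the endpoints $0$ and $-1$, this shows that no non-trivial eigenvalue lies in the closed interval $[-1,0]$. Since $\alpha_{min}\geq 1$, the right endpoint $\frac{-1+\sqrt 2}{2}\alpha_{min}$ is positive while the left endpoint $\frac{-1-\sqrt 2}{2}$ is less than $-1$, so the target interval $\big[\frac{-1-\sqrt 2}{2},\frac{-1+\sqrt 2}{2}\alpha_{min}\big]$ contains $[-1,0]$; I therefore only need to control the non-trivial eigenvalues lying strictly outside $[-1,0]$.

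Next I would split into the two remaining cases. For a positive non-trivial eigenvalue $\mu$, since $\lambda_{+}(G)$ is the least positive eigenvalue we have $\mu\geq\lambda_{+}(G)$, and the preceding theorem gives $\lambda_{+}(G)>\frac{-1+\sqrt 2}{2}\alpha_{min}$; hence $\mu$ exceeds the right endpoint and lies outside the interval. For a non-trivial eigenvalue $\mu<-1$, since $\lambda^{-}(G)$ is the largest eigenvalue smaller than $-1$ we have $\mu\leq\lambda^{-}(G)$, and the theorem gives $\lambda^{-}(G)<\frac{-1-\sqrt 2}{2}$; hence $\mu$ falls below the left endpoint and again lies outside the interval.

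I do not anticipate a genuine obstacle, since the statement is essentially a repackaging of the preceding theorem. The only point demanding care is the bookkeeping around the open interval $(-1,0)$: because the target interval strictly contains $[-1,0]$, the two strict inequalities of the theorem alone do not suffice, and the Mohammadian--Trevisan eigenvalue-free interval is precisely what bridges the gap. Assembling the three observations then yields that no non-trivial eigenvalue meets $\big[\frac{-1-\sqrt 2}{2},\frac{-1+\sqrt 2}{2}\alpha_{min}\big]$.
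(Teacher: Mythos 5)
Your proposal is correct and follows the route the paper intends: the corollary is stated without proof as an immediate consequence of the preceding theorem, and your case split (positive eigenvalues bounded below by $\lambda_{+}(G)$, eigenvalues below $-1$ bounded above by $\lambda^{-}(G)$) is exactly that derivation. Your explicit appeal to the Mohammadian--Trevisan result to exclude non-trivial eigenvalues from $(-1,0)$ fills a step the paper leaves implicit (it cites that result only in the introduction), and it is indeed needed since the theorem's two inequalities alone do not cover that subinterval.
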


%%%%%%%%%%%%%%%%%%%%%%%%%%%%%%%%%%%%%%%%%%%%%%%%%%%%%%%
%%%%%%%%%%%%%%%%%%%%%%%%%%%%%%%%%%%%%%%%%%%%%%%%%%%%%%%

\section{Characteristic polynomial}
This section is about the characteristic polynomial of a $\mathcal{C}$-graph $G=C(\alpha_1,\alpha_2,\ldots,\alpha_{2k})$ of order $n$. We first obtain an expression for the characteristic polynomial of the quotient matrix $Q_{\pi}(G)$, and then use it to calculate the characteristic polynomial of $A$. Suppose $\Psi(x)$ and $\Psi_\pi(x)$ respectively denote the characteristic polynomial of $A$ and $Q_\pi(G)$. Then,

$$\Psi(x)=x^{\sum_{i=1}^{k}(\alpha_{2i}-1)}(x+1)^{\sum_{i=1}^{k}(\alpha_{2i-1}-1)}\Psi_{\pi}(x).$$

%For $1\leq i\leq k$, consider the finite subsequence $\{\alpha_1,\alpha_2,\ldots,\alpha_{2i}\}$ of $\{\alpha_1,\alpha_2,\ldots,\alpha_{2k}\}$; and let $G_i=C(\alpha_1,\alpha_2,\ldots,\alpha_{2i})$. Let $\Phi_i(x)$ denote the characteristic polynomial of the quotient matrix of $G_i$. Then $\Phi_k(x)=\psi_\pi(x)$.\\
Let $\beta_1,\beta_2,\ldots,\beta_n$ be real numbers. Now  consider the following tridiagonal determinant:
$$T(\beta_1,\beta_2,\ldots,\beta_n)=\begin{vmatrix}
\beta_1&-1\\
1&\beta_2&-1\\
&1&\beta_3&-1\\
& &\ddots & \ddots& \ddots \\
&&&1&\beta_{n-1}&-1\\
&&&&1&\beta_n
\end{vmatrix}.$$
The above determinant can be found by using the three-term recurrence relation for the determinant of tridiagonal matrices. We provide an alternative way to find it. \\Let $I_n=\{1,2,\ldots,n\}$ and let $S_n$ denote the set of all permutations on $I_n$. We call a permutation $e$-transposition if it is a two  cycle of the form $(l,\ l+1)$, for some $1\leq l<n$. Let $E_n$ denote the set of all permutations which are a product of disjoint $e$-transpositions. Also we consider the identity permutation to be an element of $E_n.$

First we prove the following lemma.
\begin{lemma}
\label{Cgraphs_cp_lm1}
Let $\beta_1,\beta_2,\ldots,\beta_n$ be real numbers, then 
$$T(\beta_1,\beta_2,\ldots,\beta_n)=\sum_{\sigma\in E_{n}}\prod_{\substack{i\in I_n,\\ \sigma(i)=i}}\beta_i.$$
\end{lemma}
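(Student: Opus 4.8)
The plan is to prove the identity by induction on $n$, expanding the tridiagonal determinant $T(\beta_1,\ldots,\beta_n)$ along its last row (or last column) and matching the resulting recurrence to a combinatorial recurrence satisfied by the right-hand side. First I would establish the base cases: for $n=1$ we have $T(\beta_1)=\beta_1$, and $E_1$ consists only of the identity, so the sum is $\beta_1$; for $n=2$ we have $T(\beta_1,\beta_2)=\beta_1\beta_2+1$, while $E_2=\{\mathrm{id},(1\,2)\}$ gives $\beta_1\beta_2$ from the identity and an empty product equal to $1$ from the transposition $(1\,2)$. These agree, which fixes the convention that an empty product is $1$.

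For the inductive step, I would expand along the last row of the determinant. Because the matrix is tridiagonal with subdiagonal entries $1$, superdiagonal entries $-1$, and diagonal entries $\beta_i$, the standard three-term recurrence reads
\begin{equation}
T(\beta_1,\ldots,\beta_n)=\beta_n\,T(\beta_1,\ldots,\beta_{n-1})+T(\beta_1,\ldots,\beta_{n-2}),
\end{equation}
where the $+$ sign (rather than $-$) arises precisely because the product of the off-diagonal entries is $(1)(-1)=-1$. The heart of the proof is then to show that the right-hand side $F(\beta_1,\ldots,\beta_n):=\sum_{\sigma\in E_n}\prod_{\sigma(i)=i}\beta_i$ obeys the same recurrence. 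To see this, I would partition $E_n$ according to how the index $n$ behaves: either $n$ is a fixed point of $\sigma$, or $n$ is moved. Since the only allowed transpositions are of the form $(l,\,l+1)$ and they must be disjoint, the only way $n$ can be moved is by the transposition $(n-1,\,n)$, in which case the restriction of $\sigma$ to $\{1,\ldots,n-2\}$ is an arbitrary element of $E_{n-2}$. If instead $n$ is fixed, the restriction to $\{1,\ldots,n-1\}$ is an arbitrary element of $E_{n-1}$ and $n$ contributes a factor $\beta_n$. This yields
\begin{equation}
F(\beta_1,\ldots,\beta_n)=\beta_n\,F(\beta_1,\ldots,\beta_{n-1})+F(\beta_1,\ldots,\beta_{n-2}),
\end{equation}
which is the same recurrence, completing the induction.

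The main obstacle I anticipate is not the recurrence itself but the bookkeeping in the bijective decomposition of $E_n$: one must verify carefully that removing the transposition $(n-1,\,n)$ from a permutation in $E_n$ yields a well-defined element of $E_{n-2}$ (that is, that $n-1$ is genuinely free to be paired with $n$ only, given disjointness), and conversely that adjoining $(n-1,\,n)$ to any element of $E_{n-2}$ produces a valid element of $E_n$. Establishing that this correspondence is a bijection, together with the parallel statement that fixing $n$ restricts bijectively onto $E_{n-1}$, is the crux. I would also take care to confirm that the fixed-point product $\prod_{\sigma(i)=i}\beta_i$ factors correctly in each case — in the $n$-fixed case the factor $\beta_n$ splits off cleanly, while in the $(n-1,\,n)$-transposition case neither $\beta_{n-1}$ nor $\beta_n$ appears, matching the index set $\{1,\ldots,n-2\}$ of the smaller product. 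Once these structural claims are in place, the two recurrences coincide and the base cases match, so the identity follows.
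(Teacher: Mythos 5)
Your proof is correct, but it takes a genuinely different route from the paper's. The paper proves the lemma in one pass via the Leibniz formula: writing $\det T=\sum_{\sigma\in S_n}\operatorname{sgn}(\sigma)\prod_{i}t_{i,\sigma(i)}$, it observes that a term can be nonzero only if $\sigma(i)\in\{i-1,i,i+1\}$ for every $i$, so that $\sigma$ must be a product of disjoint $e$-transpositions, and then checks that for such a $\sigma$ with $l$ transpositions the permutation sign $(-1)^{l}$ cancels against the factor $(-1)^{l}$ arising from the $l$ products $t_{i,i+1}\,t_{i+1,i}=(-1)(1)=-1$, leaving exactly $\prod_{\sigma(i)=i}\beta_i$. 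Your argument instead runs an induction on $n$: the determinant satisfies $T_n=\beta_n T_{n-1}+T_{n-2}$ (with the $+$ sign correctly traced to the off-diagonal product $-1$), and the combinatorial side satisfies the same recurrence via the split of $E_n$ according to whether $n$ is fixed or paired with $n-1$; your base cases $n=1,2$ check out, and the bijective bookkeeping you flag as the crux is routine and works exactly as you describe, since disjointness forces $(n-1,\,n)$ to be the only transposition that can move $n$. What the paper's approach buys is that the set $E_n$ emerges naturally as the support of the Leibniz sum (at the cost of a small unstated verification that $|\sigma(i)-i|\le 1$ for all $i$ forces $\sigma$ to be a product of disjoint adjacent transpositions); what yours buys is a completely sign-free, self-contained induction that never needs to reason about $\operatorname{sgn}(\sigma)$. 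Both constitute complete proofs of the identity.
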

\begin{proof}
Let $T=[t_{ij}]$ be the matrix corresponding to the determinant $T(\beta_1,\beta_2,\ldots,\beta_n)$.
By the Leibniz formula for determinants:
$$T(\beta_1,\beta_2,\ldots,\beta_n)=\det T=\sum_{\sigma\in S_n}sgn(\sigma)\prod_{i\in I_n}t_{i,\sigma(i)}.$$
Now $t_{i,\sigma(i)}$ is (possibly) nonzero if $\sigma(i)=i-1$ or $i$ or $i+1$. Thus for a nonzero $t_{1,\sigma(1)}t_{2,\sigma(2)}\ldots t_{n,\sigma(n)}$, the permutation $\sigma$ must be a product of distinct $e$-transpositions. Now suppose $\sigma=(i_1,\ i_1+1)(i_2,\ i_2+1)\ldots (i_l,\ i_l+1)$ be a product of distinct $e$-transpositions. Then $sgn(\sigma)=(-1)^l$ and 

\begin{equation*}\begin{split}
t_{1,\sigma(1)}t_{2,\sigma(2)}\ldots t_{n,\sigma(n)}&=(-1)^l\prod_{j\notin\{i_1, i_1+1,i_2,i_2+1,\ldots, i_l, i_l+1\}}\beta_j\\
&=(-1)^l\prod_{\sigma(j)=j}\beta_j.
\end{split}
\end{equation*}
Now taking summations over all permutations of $I_n$, we get our result.
\end{proof}
\begin{theorem}
Let $G=C(\alpha_1,\alpha_2,\cdots,\alpha_{2k})$ be a $\mathcal{C}$-graph of order $n$. Then the characteristic polynomial of  the quotient matrix $Q_\pi(G)$ is given by $$\psi_\pi(x)=\prod_{i=1}^k(\alpha_{2i-1}-1-x)(\alpha_{2i}+x)T(\beta_1,\beta_2,\ldots,\beta_{2k}),$$

where $\beta_1=\dfrac{1+x}{\alpha_1-1-x}$,  and $\beta_i=\begin{cases}\frac{\alpha_i}{\alpha_{i}-1-x}&\text{ if }i \text{ odd and  }i\geq3\\
\frac{\alpha_i}{\alpha_{i}+x}&\text{ if }i \text{ even.  }
\end{cases}$ 
\end{theorem}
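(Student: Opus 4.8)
The plan is to compute $\psi_\pi(x)=\det(xI-Q_\pi(G))$ directly and reduce it, by determinant-preserving elementary operations, to the tridiagonal determinant $T$. Since the order $2k$ is even I may pass freely between $\det(xI-Q_\pi(G))$ and $\det(Q_\pi(G)-xI)$. The whole reduction rests on the \emph{staircase} shape of $Q_\pi(G)$ coming from the iterated joins: every odd-indexed row has an all-zero block to the left of its diagonal, every even-indexed row carries the full prefix $\alpha_1,\ldots,\alpha_{2i-1}$, and \emph{all} rows share the same tail (even columns equal to $\alpha_j$, odd columns equal to $0$) beyond their diagonal. Subtracting neighbouring rows of the same parity therefore cancels these repeated tails, exactly as in the reduction already used to prove that $-1$ is an eigenvalue of $Q_\pi(G)$ iff $\alpha_2=1$. (Alternatively, one may start from $Q_\pi(G)=D^{\frac12}(A_{2k}+D')D^{\frac12}$ to write $\det(Q_\pi(G)-xI)=\big(\prod_i\alpha_i\big)\det(A_{2k}+D'-xD^{-1})$ and tridiagonalize the antiregular skeleton $A_{2k}$.)

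Two sub-steps organize the computation. First, the odd steps collapse for free: the last row of the leading $m\times m$ block is $[0,\ldots,0,\alpha_m-1-x]$ when $m$ is odd, so expanding along it gives the clean relation $\det M_m=(\alpha_m-1-x)\det M_{m-1}$ and pulls out the factor $(\alpha_{2i-1}-1-x)$. Second, for the even steps I would split the last column of the even block as $\alpha_{2i}\mathbf{1}+(0,\ldots,0,-x-\alpha_{2i})^{\mathsf T}$ and use multilinearity of the determinant; the second piece contributes $-(\alpha_{2i}+x)\det M_{m-1}$, while the $\mathbf{1}$-column is simplified by subtracting the top row from the others. This is where the boundary constant $1+x$ is produced, and it is precisely this corner effect that forces the first diagonal entry to be $\beta_1=\frac{1+x}{\alpha_1-1-x}$ rather than the generic value $\frac{\alpha_1}{\alpha_1-1-x}$.

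After carrying out these operations and extracting $(\alpha_{2i-1}-1-x)$ from each odd row and $(\alpha_{2i}+x)$ from each even one, the residual determinant becomes tridiagonal with every sub-diagonal entry equal to $1$ and every super-diagonal entry equal to $-1$, i.e.\ exactly the shape defining $T(\beta_1,\ldots,\beta_{2k})$; reading off the diagonal yields the stated $\beta_i$, the odd/even dichotomy reflecting whether the extracted pivot denominator was $\alpha_i-1-x$ or $\alpha_i+x$. This gives $\psi_\pi(x)=\prod_{i=1}^k(\alpha_{2i-1}-1-x)(\alpha_{2i}+x)\,T(\beta_1,\ldots,\beta_{2k})$, and Lemma \ref{Cgraphs_cp_lm1} may then be invoked (separately) to expand the tridiagonal factor combinatorially.

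I expect the main obstacle to be the bookkeeping at the two boundaries together with the global sign. Concretely, one must verify that after the tail-cancellations the off-diagonal entries really normalize to $\pm1$, so that the residual determinant is literally $T$, and that the special top-left value $1+x$ appears exactly once. Moreover each even-indexed pivot contributes a sign flip, so the reduction naturally produces an overall factor of the form $(-1)^{k}$; this net sign must be reconciled with the normalization asserted in the statement, which is the one point where the argument needs genuine care rather than routine manipulation.
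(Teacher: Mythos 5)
Your proposal follows essentially the same route as the paper: exploit the staircase structure by subtracting same-parity rows to cancel the shared tails (the paper's operations $R'_{2k-i}\leftarrow R_{2k-i}-R_{2k-i-2}$ and $R'_2\leftarrow R_2-R_1$, plus a row permutation), arrive at a tridiagonal determinant, and extract the factors $(\alpha_{2i-1}-1-x)$ and $(\alpha_{2i}+x)$ to normalize the off-diagonal entries and read off the $\beta_i$. Your worry about the global sign is well founded and correctly resolved: the paper's own reduction produces exactly the factor $(-1)^k$ you predict (from multiplying the $k$ even-indexed rows by $-1$), so the displayed theorem statement, which omits $(-1)^k$, is off by that sign whenever $k$ is odd --- the example with $k=2$ happens not to detect it.
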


\begin{proof}
The characteristic polynomial of $Q_{\pi}(G)$ is
 $$\psi_\pi(x)=\det (Q_\pi(G)-xI)=\begin{vmatrix}
(\alpha_1-1-x) &\alpha_2 &0 &\alpha_4& \ldots & 0&\alpha_{2k} \\
\alpha_1&-x &0 &\alpha_4&\ldots &0&\alpha_{2k}\\
0 &0&(\alpha_3-1-x)&\alpha_4 & \ldots &0&\alpha_{2k}\\
\alpha_1&\alpha_2&\alpha_3&-x&\ldots&0&\alpha_{2k}\\
& & & & \ddots \\
0&0&0&0&\ldots&(\alpha_{2k-1}-1-x)&\alpha_{2k} \\
\alpha_1&\alpha_2&\alpha_3&\alpha_4&\ldots &\alpha_{2k-1}&-x
\end{vmatrix}.$$

We perform $R'_{2k-i}\leftarrow R_{2k-i}-R_{2k-i-2}$ for all $i=0,1,\ldots,2k-3$; and $ R_2^{'}\gets R_2-R_1$. We obtain
$$\psi_\pi(x)\equiv \begin{vmatrix}
(\alpha_1-1-x) &\alpha_2 &0 &\alpha_4& \ldots & 0&\alpha_{2k} \\
(1+x)&-(\alpha_2+x) &0 &0&\ldots &0&0\\
-(\alpha_1-1-x) &-\alpha_2&(\alpha_3-1-x)&0 & \ldots &0&0\\
0&(\alpha_2+x)&\alpha_3&-(\alpha_4+x)&\ldots&0&0\\
& & & & \ddots \\
0&0&0&0&\ldots&(\alpha_{2k-1}-1-x)&0 \\
0&0&0&0&\ldots &\alpha_{2k-1}&-(\alpha_{2k}+x)
\end{vmatrix}$$
Again performing $R_1^{'}\gets R_1+\sum_{i=1}^{k-1}R_{2i+1}$, we obtain
$$\psi_\pi(x) \equiv \begin{vmatrix}
0 &0 &0 &0& \ldots &(\alpha_{2k-1}-1-x)&\alpha_{2k} \\
(1+x)&-(\alpha_2+x) &0 &0&\ldots &0&0\\
-(\alpha_1-1-x) &-\alpha_2&(\alpha_3-1-x)&0 & \ldots &0&0\\
0&(\alpha_2+x)&\alpha_3&-(\alpha_4+x)&\ldots&0&0\\
& & & & \ddots \\
0&0&0&0&\ldots&(\alpha_{2k-1}-1-x)&0 \\
0&0&0&0&\ldots &\alpha_{2k-1}&-(\alpha_{2k}+x)
\end{vmatrix}$$

Taking permutation on rows we obtain a tridiagonal determinant.
$$\psi_\pi(x) \equiv \begin{vmatrix}
(1+x)&-(\alpha_2+x) &0 &0&\ldots &0&0\\
-(\alpha_1-1-x) &-\alpha_2&(\alpha_3-1-x)&0 & \ldots &0&0\\
0&(\alpha_2+x)&\alpha_3&-(\alpha_4+x)&\ldots&0&0\\
0&0&-(\alpha_3-1-x)&-\alpha_4&\ldots&0&0\\
& & & & \ddots \\
0&0&0&0&\ldots &\alpha_{2k-1}&-(\alpha_{2k}+x)\\
0 &0 &0 &0& \ldots &-(\alpha_{2k-1}-1-x)&-\alpha_{2k}
\end{vmatrix}$$
Now, for $1\leq i\leq k$, we multiply $2i$-th row by $-1$. We also use the fact that sign interchange between $ij$-th and $ji$-th of a tridiagonal matrix do not change the determinant. So we obtain
\begin{equation*}
\begin{split}
\psi_\pi(x)&=(-1)^k\begin{vmatrix}
(1+x)&-(\alpha_2+x) &0 &\ldots &0&0\\
(\alpha_1-1-x) &\alpha_2&-(\alpha_3-1-x) & \ldots &0&0\\
0&(\alpha_2+x)&\alpha_3&\ldots&0&0\\
0&0&(\alpha_3-1-x)&\ldots&0&0\\
& & &  \ddots \\
0&0&0&\ldots &\alpha_{2k-1}&-(\alpha_{2k}+x)\\
0 &0  &0& \ldots &(\alpha_{2k-1}-1-x)&\alpha_{2k}
\end{vmatrix}\\
&=(-1)^k(\alpha_1-1-x)(\alpha_2+x)(\alpha_3-1-x)(\alpha_4+x)\cdots(\alpha_{2k}+x)T(\beta_1,\beta_2,\ldots,\beta_{2k}).
\end{split}
\end{equation*}
The last form in the previous equation is obtained by taking $(\alpha_1-1-x)$ common from first column, $(\alpha_2+x)$ from the second column, $(\alpha_3-1-x)$ from the third column and finally $(\alpha_{2k}+x)$ from the last column.
This completes the proof of the theorem.
\end{proof}
We have a look at an example to help us comprehend the above theorem before wrapping up this work.
\begin{example}

Consider the $\mathcal{C}$-graph $G=C(4,3,2,2)$ with $11$ vertices. Then
$$E_n=\{ id, (1, 2), (2, 3), (3, 4), (1, 2)(3, 4)  \}.$$
Therefore, \\
 \begin{align*}
  T(\beta_1,\beta_2,\beta_3,\beta_4) =&\beta_1 \beta_2 \beta_3 \beta_4+\beta_1 \beta_2 +\beta_1 \beta_4 +\beta_3 \beta_4 +1 \\
 =& \frac{12(1+x)}{(3-x)(3+x)(1-x)(2+x)} + \frac{3(1+x)}{(3-x)(3+x)} + \frac{2(1+x)}{(3-x)(2+x)}\\&+\frac{4}{(1-x)(2+x)}+1.
\end{align*} 
Therefore 
\begin{align*}
\Psi_{\pi}(x)=&(-1)^2(3-x)(3+x)(1-x)(2+x)T(\beta_1,\beta_2,\beta_3,\beta_4)\\=& 12(1+x)+3(1+x)(1-x)(2+x)+2(1+x)(3+x)(1-x)+4(3+x)(3-x)\\&+(3-x)(3+x)(1-x)(2+x).
\end{align*}

Simplifying the expression, we obtain
$$\Psi_{\pi}(x)= x^4-4x^3-27x^2+8x+78.$$
Therefore,
$$\Psi(x)=x^3(x+1)^4 \big{[} x^4-4x^3-27x^2+8x+78   \big{]}.$$
\end{example}

\section{Conclusions}
In this work, the primary attention is paid to obtaining an extended eigenvalue-free interval for a subclass of cographs; we call it a $\mathcal{C}$-graph. The class of $\mathcal{C}$-graphs is a large subclass of cographs that can be represented by a finite creation sequence (of natural numbers). However, it is not clear that we can give a similar thing for cographs. Threshold graphs are another class of cographs that has a (binary) creation sequence. The question remains: Is there a creation sequence for each cograph?
\medskip

Compared to the previous method, our approach to the cographs is entirely different. We obtain an explicit formula for the characteristic polynomial by observing that an equitable partition of a $\mathcal{C}$-graph is determined by its creation sequence. For a $\mathcal{C}$-graph $G$, we show that the set $\big{[}\frac{-1-\sqrt{2}}{2}, -1)\cup (-1, 0) \cup (0, \frac{-1+\sqrt{2}}{2}\alpha_{min}\big{]}$ contains no eigenvalues of $G$, where $\alpha_{min}\geq1$ is the smallest integer of the creation sequence.

%%%%%%%%%%%%%%%%%%%%%%%%%%%%%%%%%%%%%%%%%%%%%%%%%%%%%%%%%%%%%%%%%%%%%%%%%%%%%%%%%%%%%%%%%%%%%%%%%%%%%%%%%%%%%%%%
\section{Acknowledgements}
The  research of Santanu Mandal is supported by the University Grants Commission (UGC), India under the beneficiary code BININ01569755. This author also acknowledges the infrastructure provided by the VIT Bhopal University, India. 
\section{Statements and Declarations} \textbf{Competing Interests:} On behalf of all authors, the corresponding author states that there is no conflict of interest.

\section{Data availability}
Data sharing is not applicable to this article as no data were created or analyzed in this study.

\end{document}